\definecolor{dkgreen}{rgb}{0,0.6,0}
\definecolor{gray}{rgb}{0.5,0.5,0.5}
\definecolor{mauve}{rgb}{0.58,0,0.82}
\tiny\color{gray},
\def \d{\mathrm{d}} 
\newcommand{\R}{{\bf R}}
\newtheorem{thm}{Theorem}[section]
\newtheorem{lem}[thm]{Lemma}
\newtheorem{prop}[thm]{Proposition}
\newtheorem{rem}[thm]{Remark}
\newcommand{\be}{\begin{equation}}
\newcommand{\ee}{\end{equation}}
\newcommand{\bea}{\begin{eqnarray}}
\newcommand{\bes}{\begin{subEquations}}
\newcommand{\ees}{\end{subEquations}}
\newcommand{\bgt}{\begin{gather}}
\newcommand{\egt}{\begin{gather}}
\newcommand{\eea}{\end{eqnarray}}
\newcommand{\beaa}{\begin{eqnarray*}}
\newcommand{\eeaa}{\end{eqnarray*}}
\def \R{\mathbb{R}}
\def \E{\mathbb{E}}
\def \F{\mathbb{F}}
\def \P{\mathbb{P}}
\def \W{\mathbb{W}}
\def \Ac{{\cal A}}
\def \Fc{{\cal F}}
\def \Hc{{\cal H}}
\def \Pc{{\cal P}}
\def \Nc{{\cal N}}
\def \Tc{{\cal T}}
\def\Dt#1{\Frac{\partial #1}{\partial t}}
\def \Frac{\displaystyle\frac}
\def \Sum{\displaystyle\sum}
\def \Prod{\displaystyle\prod}
\def\red#1{{\color{red}#1}}
\def \mra{\mathrm{a}}
\def \bolx{\boldsymbol{x}}
\def \boX{\boldsymbol{X}}
\def \boY{\boldsymbol{Y}}
\numberwithin{equation}{section} 
\begin{document}

\title{Generative modeling for time series via Schr\"odinger bridge }
\author{Mohamed HAMDOUCHE\footnote{LPSM,  Universit\'e Paris Cité and Sorbonne Universit\'e, and Qube Research and Technologies, \sf\href{mailto:hamdouche at lpsm.paris}{hamdouche at lpsm.paris}} \quad  Pierre HENRY-LABORDERE\footnote{Qube Research and Technologies, \sf \href{mailto:pierre.henrylabordere at qube-rt.com}{pierre.henrylabordere at qube-rt.com}} \quad  Huy\^en PHAM\footnote{LPSM,  Universit\'e Paris Cité and Sorbonne Universit\'e,   \sf \href{mailto:pham at lpsm.paris}{pham at lpsm.paris}; This author  is supported by  the BNP-PAR Chair ``Futures of Quantitative Finance", 
and by FiME, Laboratoire de Finance des March\'es de l'Energie, and the ``Finance and Sustainable Development'' EDF - CACIB Chair}}

\date{}

\maketitle

\begin{abstract}  
We propose a novel generative model for time series based on Schr\"odinger bridge (SB) approach. This consists in the entropic interpolation via optimal transport between a reference probability measure on path space and a target measure consistent with the joint data distribution of the time series. The solution is characterized by a stochastic differential equation on finite horizon with a path-dependent drift function, hence respecting  the temporal dynamics of the time series distribution.
We can  estimate the drift function from data samples either by kernel regression methods 
or with LSTM neural networks, and the simulation of the SB diffusion  yields new synthetic data samples of the time series. 

The performance of our generative model is evaluated through a series of numerical experiments.  First, 
we test with a toy autoregressive model, a GARCH Model, and the example of fractional Brownian motion,  and measure the accuracy of our algorithm with marginal and temporal dependencies metrics. Next, we use our SB generated synthetic samples for the application to deep hedging on real-data sets. Finally, we illustrate the SB approach  for generating sequence of images. 
\end{abstract}

\vspace{5mm}

\noindent {\bf Keywords:} generative models, time series, Schr\"odinger bridge, kernel estimation, deep hedging. 

\section{Introduction}

Sequential data appear widely in our society like in video and audio data, and simulation of time series models are used in various industrial applications including clinical predictions \cite{lyuetal18}, and weather forecasts \cite{chenetal18}. In the financial industry, simulations of dynamical scenarios are considered in market stress tests, risk measurement, and risk management, e.g. in deep hedging \cite{buedeep}. The design of time series model is a delicate issue, requiring expensive calibration task, and subject to error misspecification and model risk. Therefore, the generation for synthetic samples of time series has gained an increasing attention over the last years, and opens the door in the financial sector for a pure data driven approach in risk management.

Generative modeling (GM) has become over the last  years a successful machine lear\-ning task for data synthesis notably in (static) image processing. Several competing methods have been developed and state-of-the-art includes 
{\it Likelihood-based models} like energy-based models (EBM) \cite{LeCunetal06}, variational auto-encoders (VAE) \cite{VAE}, {\it Implicit generative models} with the prominent works on generative adversarial network (GAN) \cite{GAN} and its extensions \cite{arjetal17}, and recently the new {\it generation of score-based models} using Langevin dynamics, \cite{sonerm19}, \cite{sonetal21}, \cite{costemallat22},  and diffusion models via Schr\"odinger bridge, see \cite{PHL19} for the application to a class of stochastic volatility models, and 
 \cite{wangetal21}, \cite{debor21}. 
Generative methods for time series raises challenging issues for learning efficiently the temporal dependencies. Indeed, in order to capture the potentially complex dynamics of variables across time, it is not sufficient to learn the time marginals or even the joint distribution without exploiting the sequential structure. An increasing attention has been paid to these methods in the literature and state-of-the-art generative methods for time series are:  
{\it Time series GAN} \cite{yoon19} which  combines an unsupervised adversarial loss on real/synthetic data and supervised loss for generating sequential data,  {\it Quant GAN} \cite{wie20} with an adversarial generator using temporal convolutional networks,  {\it Causal optimal transport COT-GAN}  \cite{xuetal20} with adversarial generator using the adapted Wasserstein distance for processes, {\it Conditional loss Euler generator} \cite{remetal21} starting from a diffusion representation  time series and minimizing the conditional distance between transition probabilities of real/synthetic samples, {\it Signature embedding of time series} \cite{fer19}, \cite{nietal20}, \cite{bueetal20}, and 
{\it Functional data analysis}  with neural SDEs \cite{jaietal23}.

In this paper, we develop a novel generative model based on Schr\"odinger bridge approach that captures the temporal dynamics of the time series.  This consists in the entropic interpolation via optimal transport between a reference probability measure on path space and a target measure consistent with the joint data distribution of the time series. The solution is characterized by a stochastic differential equation on finite horizon with a path-dependent drift function, called Schr\"odinger bridge time series (SBTS)  diffusion, and   
the simulation of the SBTS diffusion yields new synthetic data samples of the time series.  

Our SB approach differs from related works that have been recently designed for lear\-ning  marginal (or static)  distributions. 
In \cite{wangetal21}, the authors perform generative models by solving two SB problems. The paper \cite{debor21} formulates generative modeling by computing the SB problem between the data and prior distribution. The very recent work \cite{deepmomentum} proposes momentum SB by considering an additional velocity variable for learning multi marginal distributions. Let us mention also the recent paper \cite{alignedSB} that combines SB with $h$-transform in order to respect aligned data. 
Instead, our SBTS diffusion interpolates the joint time series distribution starting from an initial deterministic value. Moreover, we propose an alternate method for the estimation of the drift function, which is path-dependent in our case. While \cite{wangetal21} uses a logistic regression for estimating the density ratio and then the drift function, which requires additional samples from Gaussian noises, and \cite{debor21} performs an extension of the Sinkhorn algorithm, we propose a kernel regression method relying solely on data samples, and this turns out to be quite simple, efficient and low-cost computationally. Compared to GAN type methods, the simulation of synthetic samples from SBTS is much faster as it does not require the training of neural networks. 

We validate our methodology with several numerical experiments. We first test on some time series models like autoregressive, GARCH models, and also for the fractional Brownian motion with rough paths. The accuracy is measured by some metrics aiming to capture the temporal dynamics and the correlation structure. We also provide operational metrics of interest for the financial industry by implementing our results on real data-sets, and applying to the deep hedging of call options. Finally, we show some numerical illustrations of our SB method in high dimension for the generation of sequential images. 

\section{Problem formulation}

Let $\mu$ be the distribution of a time series representing the evolution of some  $\R^d$-valued process of interest (e.g. asset price, claim process, audio/video data, etc), and suppose that one can observe samples of this process at given fixed  
times of a discrete time grid $\Tc$ $=$ $\{t_i, i = 1,\ldots,N\}$ on $(0,\infty)$. We set $T$ $=$ $t_N$ as the terminal  observation horizon.  Our goal is to construct a model that generates time series samples according to the unknown target distribution $\mu$ $\in$ $\Pc((\R^d)^N)$ the set of probability measures on $(\R^d)^N$. 
For that objective, we propose a dynamic modification of the  Schr\"odinger bridge as follows.  Let   $\Omega$ $=$ $C([0,T];\R^d)$ be the space of $\R^d$-valued continuous functions on $[0,T]$,  $X$ $=$ $(X_t)_t$ the canonical process  with initial value $X_0$ $=$ $0$, 
and $\F$ $=$ $(\Fc_t)_t$ the canonical filtration. Denoting by $\Pc(\Omega)$ as the space of probability measures on $\Omega$, we search for $\P$ (representing the theoretical generative model) in $\Pc(\Omega)$, 
close to the Wiener measure $\W$   in the sense of Kullback-Leibler (or relative entropy), and consistent with  the observation samples. In other words, we look for a probability measure  $\P^*$ $\in$ $\Pc(\Omega)$ solution to: 
\begin{align} \label{SBP} 
\P^* & \in \; {\rm arg}\min_{\P\in\Pc_{\Tc}^\mu(\Omega)} H(\P|\W),  
\end{align}
where $\Pc_{\Tc}^\mu(\Omega)$ is the set of probability measures $\P$ on $\Omega$ with joint distribution $\mu$ at $(t_1,\ldots,t_N)$, i.e.,  $\P$ $\circ$ $(X_{t_1},\ldots,X_{t_N})^{-1}$ $=$  $\mu$, and $H(.|.)$ is the relative entropy between two probability measures defined by 
\begin{align}
\Hc(\P|\W) & = \; \left\{ 
\begin{array}{cc}
\int  \ln \frac{\d\P}{\d\W} \d\P, &\;  \mbox{ if }  \P  \ll  \W \\
\infty,  & \; \mbox{ otherwise }. 
\end{array}
\right.
\end{align}
Denoting by $\E_\P$ and $\E_\W$ the expectation under $\P$ and $\W$, we see that $\Hc(\P|\W)$ $=$  $\E_\P[\ln \frac{\d\P}{\d\W}]$ $=$ $\E_\W[ \frac{\d\P}{\d\W}\ln \frac{\d\P}{\d\W}]$ when $\P$ $\ll$ $\W$.  
Compared to the classical  Schr\"odinger bridge (SB) (see \cite{Leo14}, and the application to generative modeling in \cite{wangetal21}), which looks for a probability measure $\P$ that interpolates between an initial probability measure  and a  target probability  distribution at terminal time $T$,  here, we take into account via the constraint in $\Pc_\Tc^\mu$ the temporal dependence of the process observed at sequential times $t_1<\ldots<t_N$, and look for an entropic interpolation of the time series distribution. 
We call \eqref{SBP} the  Schr\"odinger bridge for  time series (SBTS) problem. 
 
\vspace{1mm}

Let us now  formulate  (SBTS) as a stochastic control problem following the well-known connection established for classical (SB) in \cite{daipra}.  Given  $\P$ $\in$ $\Pc(\Omega)$ with finite relative entropy $H(\P|\W)$ $<$ $\infty$, it is known by Girsanov's theorem that one can associate to $\P$ an $\F$-adapted $\R^d$-valued process 
$\alpha$ $=$ $(\alpha_t)$ with finite energy: $\E_{\P}[\int_0^T |\alpha_t|^2 \d t]$ $<$ $\infty$ such that 
\begin{align} \label{Palpha} 
 \ln \frac{\d\P}{\d\W}  &= \; \int_0^T \alpha_t. \d X_t - \frac{1}{2} \int_0^T |\alpha_t|^2 \d t, 
\end{align}
and  $X_t - \int_0^t \alpha_s \d s$, $0\leq t\leq T$, is a Brownian motion under $\P$. We then have 
\begin{align}
\Hc(\P|\W) & = \;  \E_{\P}\Big[  \frac{1}{2}\int_0^T   |\alpha_t|^2 \d t \Big]. 
\end{align}
Therefore, (SBTS) is reformulated equivalently in the language of  stochastic control as: 
\begin{align} \label{SBTScon}
\begin{cases}
\mbox{ Minimize over } \alpha \in \Ac,  \quad J(\alpha) \; = \;  \E_{\P} \Big[  \frac{1}{2}\int_0^T   |\alpha_t|^2 \d t \Big] \\
\mbox{ subject to } \;  \d X_t \; = \; \alpha_t \d t + \d W_t, \; X_0 = 0, \quad (X_{t_1},\ldots,X_{t_N}) \stackrel{\P}{\sim} \mu, 
\end{cases}
\end{align}
where $W$ is a Brownian motion under $\P$,  $\Ac$ is the set of $\R^d$-valued $\F$-adapted processes s.t. $\E_\P[\int_0^T |\alpha_t|^2 \d t]$ $<$ $\infty$, and $(X_{t_1},\ldots,X_{t_N}) \stackrel{\P}{\sim} \mu$ is the usual notation for $\P$ $\circ$ $(X_{t_1},\ldots,X_{t_N})^{-1}$ $=$  
$\mu$. In the sequel, when there is no ambiguity, we omit the reference  on $\P$ in  $\E$ $=$ $\E_{\P}$ and $\sim$ $=$ $\stackrel{\P}{\sim}$. 
We denote by $V_{_{SBTS}}$ the infimum of this stochastic control problem under joint distribution constraint: 
\begin{align}
V_{_{SBTS}} &:= \; \inf_{\alpha\in\Ac_\Tc^\mu} J(\alpha),
\end{align}
where $\Ac_\Tc^\mu$ is the set of controls $\alpha$ in $\Ac$ satisfying $(X_{t_1},\ldots,X_{t_N})$ $\sim$ $\mu$ with $X_t$ $=$ $\int_0^t \alpha_s \d s + W_t$. 
Our goal is to prove the existence of an optimal control $\alpha^*$ that can be explicitly derived, and then used to generate samples of the time series distribution $\mu$ via the probability measure $\P^*$ on $\Omega$, i.e., the simulation of the optimal diffusion process $X$ controlled by the drift  $\alpha^*$.

 \section{Solution to Schr\"odinger bridge for  time series} \label{sec:SB}

Similarly as for the classical Schr\"odinger bridge  problem, we assume that the target distribution $\mu$ admits a density with respect to the Lebesgue measure on $(\R^d)^N$, and by misuse of notation, we denote by $\mu(x_1,\ldots,x_N)$ this density function.  
Denote by $\mu_\Tc^W$ the distribution of the Brownian motion on $\Tc$, i.e. of $(W_{t_1},\ldots,W_{t_N})$, which admits a density given by (by abuse of language, we use the same notation for the measure and its density) 
\begin{align} \label{muW} 
\mu_\Tc^W(x_1,\ldots,x_N) &=  \prod_{i=0}^{N-1} \frac{1}{\sqrt{2\pi(t_{i+1}-t_i)}} \exp\Big( - \frac{|x_{i+1}-x_i|^2}{2(t_{i+1}-t_i)} \Big), 
\end{align}
for $(x_1,\ldots,x_N) \in  (\R^d)^N$ 
(with the convention that $t_0$ $=$ $0$, $x_0$ $=$ $0$). The measure $\mu$ is absolutely continuous with respect to $\mu_\Tc^W$, and we shall assume that its relative entropy is finite, i.e., 
\begin{align} \label{finitemu} 
\Hc(\mu|\mu_\Tc^W) \; = \; \int \ln \frac{\mu}{\mu_\Tc^W} \d\mu & < \; \infty. 
\end{align}

\vspace{1mm}

The solution to the (SBTS) problem is provided in the following theorem.

\begin{thm} \label{thmSB} 
The diffusion process $X_t$ $=$ $\int_0^t \alpha_s^* \d s + W_t$, $0\leq t\leq T$, with $\alpha^*$ defined as 
\begin{align}
\alpha_t^* &= \; \mra^*(t,X_t; (X_{t_i})_{t_i\leq t}), \quad 0 \leq t <T, 
\end{align}
with $\mra^*(t,x;\bolx_i)$, for $t$ $\in$ $[t_i,t_{i+1})$, $\bolx_i$ $=$ $(x_1,\ldots,x_i)$ $\in$ $(\R^d)^i$, $x$ $\in$ $\R^d$,  given by 
\begin{align} \label{driftopt} 
\mra^*(t,x;\bolx_i) &=  \nabla_x \ln \E_{\W}\Big[ 
\frac{\mu}{\mu_\Tc^W} (X_{t_1},\ldots,X_{t_N})
\big| \boX_{t_i} = \bolx_i, X_t = x \Big], 
\end{align}
where we set $\boX_{t_i}$ $=$ $(X_{t_1},\ldots,X_{t_i})$,  
induces a probability measure $\P^*$ $=$ $\frac{\mu}{\mu_\Tc^W}(X_{t_1},\ldots,X_{t_N})\W$, which solves the Schr\"odinger bridge time series problem. 
Moreover, we have 
\begin{align}
V_{_{SBTS}} & = \; \Hc(\P^*|\W) \; = \; \Hc(\mu|\mu_\Tc^W). 
\end{align}
\end{thm}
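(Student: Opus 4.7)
The strategy is to (i) verify that $\P^*:=Z\,\W$ with $Z=\frac{\mu}{\mu_\Tc^W}(X_{t_1},\ldots,X_{t_N})$ is admissible, (ii) compute its relative entropy, (iii) establish optimality by a disintegration / data-processing argument, and (iv) extract the drift via Girsanov and the martingale representation of $Z$.

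\textbf{Step 1: admissibility of $\P^*$.} Since $\mu_\Tc^W$ is precisely the law of $(W_{t_1},\ldots,W_{t_N})$ under $\W$, a change of variables gives $\E_\W[Z\,\varphi(X_{t_1},\ldots,X_{t_N})]=\int\varphi\,\d\mu$ for every bounded measurable $\varphi$. Taking $\varphi\equiv 1$ shows $\P^*$ is a probability measure, and the same identity shows that $(X_{t_1},\ldots,X_{t_N})\sim\mu$ under $\P^*$, so $\P^*\in\Pc_\Tc^\mu(\Omega)$.

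\textbf{Step 2: entropy of $\P^*$.} Directly from the definition of relative entropy,
\begin{align}
\Hc(\P^*|\W)=\E_{\P^*}[\ln Z]=\E_\W[Z\ln Z]=\int\ln\frac{\mu}{\mu_\Tc^W}\,\d\mu=\Hc(\mu|\mu_\Tc^W),
\end{align}
which is finite by \eqref{finitemu}.

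\textbf{Step 3: optimality.} For any competitor $\P\in\Pc_\Tc^\mu(\Omega)$ with $\P\ll\W$, disintegrate both measures along the projection $\pi:(X_t)_{0\le t\le T}\mapsto(X_{t_1},\ldots,X_{t_N})$. Writing $\P=\mu\otimes \P^{\bolx}$ and $\W=\mu_\Tc^W\otimes \W^{\bolx}$ with regular conditional laws given $\pi$, the chain rule for relative entropy yields
\begin{align}
\Hc(\P|\W)=\Hc(\mu|\mu_\Tc^W)+\int \Hc(\P^{\bolx}|\W^{\bolx})\,\d\mu(\bolx)\;\geq\;\Hc(\mu|\mu_\Tc^W),
\end{align}
with equality iff $\P^{\bolx}=\W^{\bolx}$ $\mu$-a.s. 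Since $\d\P^*/\d\W$ depends only on $\pi(X)$, the conditional laws of $\P^*$ given $\pi$ coincide with those of $\W$, so equality is attained at $\P^*$; uniqueness of the minimizer follows from the strict convexity of $z\mapsto z\ln z$.

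\textbf{Step 4: identification of the drift.} To extract $\alpha^*$, consider the $(\W,\F)$-martingale $M_t:=\E_\W[Z\,|\,\Fc_t]$, so that $\d\P^*/\d\W\big|_{\Fc_t}=M_t$. By Girsanov's theorem applied to \eqref{Palpha}, the $\P^*$-drift of $X$ is $\alpha^*_t=\d\langle \ln M,X\rangle_t/\d t$. Now for $t\in[t_i,t_{i+1})$, the Markov property of $\W$ (and the fact that $\boX_{t_i}$ is $\Fc_t$-measurable) gives
\begin{align}
M_t=h_i(t,X_t;\boX_{t_i}),\qquad h_i(t,x;\bolx_i):=\E_\W\Big[\tfrac{\mu}{\mu_\Tc^W}(X_{t_1},\ldots,X_{t_N})\,\big|\,\boX_{t_i}=\bolx_i,X_t=x\Big],
\end{align}
which is smooth in $(t,x)$ on each interval because it is the heat-kernel-smoothed integral of $\mu/\mu_\Tc^W$. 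Applying Itô's formula to $h_i(t,X_t;\boX_{t_i})$ (with $\boX_{t_i}$ fixed on the interval) and using that $M$ is a $\W$-martingale kills the finite-variation part and yields $\d M_t=\nabla_x h_i(t,X_t;\boX_{t_i})\cdot \d X_t$. Hence
\begin{align}
\alpha^*_t=\frac{\nabla_x h_i(t,X_t;\boX_{t_i})}{h_i(t,X_t;\boX_{t_i})}=\nabla_x\ln h_i(t,X_t;\boX_{t_i}),
\end{align}
which is exactly \eqref{driftopt}. Continuity of $M$ across the observation times $t_i$ (no jumps since $\boX_{t_i}$ is already $\Fc_{t_i}$-measurable) and square-integrability $\E_{\P^*}[\int_0^T|\alpha^*_t|^2\d t]=2\Hc(\P^*|\W)<\infty$ justify $\alpha^*\in\Ac_\Tc^\mu$.

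\textbf{Main obstacle.} The delicate point is Step 4: one must handle a path-dependent drift arising from a conditional expectation that jumps through a richer conditioning set at each observation time $t_i$. The key is to exploit the Markov property of $\W$ piecewise on $[t_i,t_{i+1})$ to write $M_t$ as a deterministic function of the current state and finitely many past observations, and to verify the smoothness needed to apply Itô's formula (which follows from the Gaussian smoothing inherent in $\W$, under \eqref{finitemu}).
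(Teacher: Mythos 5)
Your proposal is correct, and Steps 1, 2 and 4 follow essentially the same route as the paper: verify that $\P^*=Z\,\W$ is a probability measure with the right joint law at $(t_1,\ldots,t_N)$, compute $\Hc(\P^*|\W)=\Hc(\mu|\mu_\Tc^W)$, and identify the drift by writing the density martingale piecewise as $h_i(t,X_t;\boX_{t_i})$ via the Markov property of $\W$, checking smoothness from the Gaussian (heat-kernel) representation, and applying It\^o plus Girsanov to get $\alpha^*_t=\nabla_x\ln h_i(t,X_t;\boX_{t_i})$. The genuine divergence is in Step 3. The paper proves the lower bound $J(\alpha)\geq\Hc(\mu|\mu_\Tc^W)$ by starting from the identity $1=\E_\W[\frac{\mu}{\mu_\Tc^W}(X_{t_1},\ldots,X_{t_N})]$, rewriting it under the competitor $\P$ via Girsanov, and applying Jensen's inequality to the exponential; this stays entirely inside the stochastic-control formulation and needs nothing beyond Girsanov and convexity. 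You instead invoke the additivity (chain rule) of relative entropy under disintegration along the projection $\pi=(X_{t_1},\ldots,X_{t_N})$, which is the classical Schr\"odinger-bridge argument: it immediately gives $\Hc(\P|\W)\geq\Hc(\P\circ\pi^{-1}|\W\circ\pi^{-1})=\Hc(\mu|\mu_\Tc^W)$, characterizes equality by $\P^{\bolx}=\W^{\bolx}$ $\mu$-a.s., and hence yields uniqueness of the minimizer for free --- something the paper's Jensen argument does not directly provide. The price is that you must know (or prove) the entropy chain rule with regular conditional probabilities on the Polish space $C([0,T];\R^d)$, whereas the paper's computation is self-contained. Both arguments are valid; your Step 3 is arguably the more structural one, and your observation that $\d\P^*/\d\W$ is $\sigma(\pi)$-measurable (so the conditional bridges of $\P^*$ coincide with those of $\W$) is exactly the right way to see that the bound is attained.
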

\begin{proof} First, observe that  $\E_\W[\frac{\mu}{\mu_\Tc^W}(X_{t_1},\ldots,X_{t_N})]$ $=$ $1$, and thus one can define a probability measure $\P^*$ $\ll$ $\W$ with density process
\begin{align}
Z_t &= \;  \E_{\W} \Big[ \frac{\d\P^*}{\d\W} \big| \Fc_t \Big] \; = \;  \E_{\W} \Big[ \frac{\mu}{\mu_\Tc^W}(X_{t_1},\ldots,X_{t_N}) \big| \Fc_t \Big], \quad 0 \leq t \leq T. 
\end{align}
Notice from the Markov and Gaussian properties of the Brownian motion  that  for $t$ $\in$ $[t_i,t_{i+1})$, $i$ $=$ $0,\ldots,N-1$, we have $Z_t$ $=$ $h_i(t,X_t;\boX_{t_i})$, where for a path $\bolx_i$ $=$ $(x_1,\ldots,x_i)$ $\in$ $(\R^d)^i$, 
$h_i(;\bolx_i)$ is defined on $[t_i,t_{i+1})\times\R^d$ by   
\begin{align}
h_i(t,x;\bolx_i) &= \; \E_{Y \sim \Nc(0,I_d)} \Big[ \frac{\mu}{\mu_\Tc^W}(\bolx_i,x+\sqrt{t_{i+1}-t} Y,\ldots,x+\sqrt{t_{N}-t} Y) \Big]
\end{align}
for $t$ $\in$ $[t_i,t_{i+1})$, $x$ $\in$ $\R^d$, and $\E_{Y \sim \Nc(0,I_d)}[.]$ is the expectation when $Y$ is distributed according to the Gaussian law $\Nc(0,I_d)$.  Moreover, by the law of conditional expectations, we have 
\begin{align}
h_i(t,x;\bolx_i) &= \; \E_{\W} \Big[ h_{i+1}(t_{i+1},X_{t_{i+1}};\bolx_i,X_{t_{i+1}}) \big| X_t = x \Big], 
\end{align}
with the convention that $h_N(t_N,x;\bolx_{N-1},x)$ $=$ $\frac{\mu}{\mu_\Tc^W}(x_1,\ldots,x_{N-1},x)$.  Therefore, for $i$ $=$ $0,\ldots,N-1$, and $\bolx$ $\in$ $(\R^d)^i$, the function $(t,x)$ $\mapsto$ $h_i(t,x;\bolx_i)$ is a strictly positive $C^{1,2}([t_i,t_{i+1})\times\R^d)$ $\cap$  
$C^{0}([t_i,t_{i+1}]\times\R^d)$ classical solution to the heat equation
\begin{align}
\Dt{h_i(.;\bolx_i)}  + \frac{1}{2} \Delta_x h_i(.;\bolx_i) &= \; 0, \quad \mbox{ on } [t_i,t_{i+1})\times\R^d, 
\end{align}
with the terminal condition: $h_i(t_{i+1},x;\bolx_i)$ $=$ $h_{i+1}(t_{i+1},x;\bolx_i,x)$ (here $\Delta_x$ is the Laplacian operator).  By applying  It\^o's formula to the martingale density process $Z$ of $\P^*$ 
under the Wiener measure $\W$, we derive 
\begin{align}
\d Z_t &= \; \nabla_x h_i(t,X_t;\boX_{t_i}) \d X_t, \\
&= \; Z_t \nabla_x \ln h_i(t,X_t;\boX_{t_i}) \d X_t,  \quad  t_i \leq t < t_{i+1}, 
\end{align}
for  $i$ $=$ $0,\ldots,N-1$. Thus, by defining the process $\alpha^*$ by $\alpha_t^*$ $=$ $\nabla_x \ln h_i(t,X_t;\boX_{t_i})$, for  $t$ $\in$ $[t_i,t_{i+1})$, $i$ $=$ $0,\ldots,N-1$, we have 
\begin{align}
\frac{\d\P^*}{\d\W} &= \; \exp\Big( \int_0^T \alpha_t^* \d X_t - \frac{1}{2} \int_0^T |\alpha_t^*|^2 \d t \Big), 
\end{align}
and by Girsanov's theorem, $X_t - \int_0^t \alpha_s^* \d s$ is a Brownian motion under $\P^*$.  On the other hand, by definition of $\P^*$, and Bayes formula,  we have for any bounded measurable function $\varphi$ on $(\R^d)^N$: 
\begin{align}
\E_{\P^*}\big[ \varphi(X_{t_1},\ldots,X_{t_N}) \big] &= \; \E_{\W} \Big[  \frac{\mu}{\mu_\Tc^W}(X_{t_1},\ldots,X_{t_N}) \varphi(X_{t_1},\ldots,X_{t_N}) \Big] \\
& = \; \int  \frac{\mu}{\mu_\Tc^W}(x_1,\ldots,x_{N}) \varphi(x_{1},\ldots,X_{N}) \mu_\Tc^W(x_1,\ldots,x_N) \d x_1\ldots\d x_N \\
& = \; \int  \varphi(x_{1},\ldots,X_{N}) \mu(x_1,\ldots,x_N) \d x_1\ldots\d x_N, 
\end{align}
which shows that $(X_{t_1},\ldots,X_{t_N}) \stackrel{\P^*}{\sim} \mu$. Moreover, by noting that 
\begin{align}
J_{}(\alpha^*) \; = \; \E_{\P^*} \Big[\int_0^T \frac{1}{2} |\alpha_t^*|^2 \d t \Big] \; = \; \E_{\P^*}\Big[ \ln \frac{\d\P^*}{\d\W} \Big] \; = \; \Hc(\P^*|\W) \; = \; \Hc(\mu|\mu_\Tc^W) < \infty,
\end{align}
where we used in the last inequality the fact that $(X_{t_1},\ldots,X_{t_N}) \stackrel{\P^*}{\sim} \mu$, this shows  in particular  that $\alpha^*$ $\in$ $\Ac_\Tc^\mu$.  
 
\vspace{1mm}

It remains to show that for any $\alpha$ $\in$ $\Ac_\Tc^\mu$ associated to a probability measure $\P$ $\ll$ $\W$ with density given by \eqref{Palpha}, i.e. $J_{}(\alpha)$ $=$ $\Hc(\P|\W)$, we have 
\begin{align} \label{inegalpha}
J(\alpha) 
& \geq \; \Hc(\mu|\mu_\Tc^W). 
\end{align}
For this, we write from Bayes formula and since  $W_t$ $=$ $X_t - \int_0^t \alpha_s \d s$ is a Brownian motion under $\P$ by Girsanov theorem:
\begin{align}
1 &= \; \E_{\W} \Big[  \frac{\mu}{\mu_\Tc^W}(X_{t_1},\ldots,X_{t_N})  \Big] \\
&= \; \E_{\P} \Big[  \exp\Big( \ln \frac{\mu}{\mu_\Tc^W}(X_{t_1},\ldots,X_{t_N})  - \int_0^T \alpha_t \d W_t - \frac{1}{2} \int_0^T |\alpha_t|^2 \d t \Big) \Big] \\
& \geq \; \exp \Big( \E_\P\Big[  \ln \frac{\mu}{\mu_\Tc^W}(X_{t_1},\ldots,X_{t_N})  - \int_0^T \alpha_t \d W_t - \frac{1}{2} \int_0^T |\alpha_t|^2 \d t \Big] \Big) \\
& = \; \exp\Big( \Hc(\mu|\mu_\Tc^W) - J(\alpha) \Big),
\end{align} 
where we use Jensen's inequality, and the fact that $(X_{t_1},\ldots,X_{t_N}) \stackrel{\P}{\sim} \mu$ in the last equality. This proves the required inequality \eqref{inegalpha}, and ends the proof.  
\end{proof}

\vspace{2mm}

\begin{rem}
The optimal drift of the Schr\"odinger bridge time series diffusion is in general path-dependent: it depends at given time $t$ not only on its current state $X_t$, but also on the past values $\boX_{\eta(t)}$ $=$ 
$(X_{t_1},\ldots,X_{\eta(t)})$, where $\eta(t)$ $=$ $\max\{ t_i: t_i \leq t\}$, and we have:
\begin{align}
\d X_t &= \; \mra^*(t,X_t;\boX_{\eta(t)}) \d t + \d W_t, \;\;\;  0 \leq t \leq T,  \;\; X_0 = 0.     
\end{align}
Moreover, the proof of the above theorem shows that this drift function is explicitly  given by 
\begin{align} \label{driftexpress1} 
 \mra^*(t,x;\bolx_i) &= \; \frac{\nabla_x h_i(t,x;\bolx_i)}{h_i(t,x;\bolx_i)}, \quad t \in [t_i,t_{i+1}), 
 \bolx_i \in (\R^d)^i, x \in \R^d, 
\end{align}
for $i$ $=$ $0,\ldots,N-1$, where 
\begin{align}
h_i(t,x;\bolx_i) &= \; \E_{Y \sim \Nc(0,I_d)} \Big[ \rho(\bolx_i,x+\sqrt{t_{i+1}-t} Y,\ldots,x+\sqrt{t_{N}-t} Y) \Big],    
\end{align}
with $\rho$ $:=$ $\frac{\mu}{\mu_\Tc^W}$ the density ratio. 
\end{rem}

\vspace{1mm}

The following result states an alternate representation of the drift function that will be useful in the next section for estimation. 

\begin{prop} \label{propdrift}
For $i$ $=$ $0,\ldots,N-1$, $t$ $\in$ $[t_i,t_{i+1})$, 
$\bolx_i$ $=$ $(x_1,\ldots,x_i)$ $\in$ $(\R^d)^i$, $x$ $\in$ $\R^d$, we have
\begin{align} \label{driftexpress2} 
    \mathrm{a}^*(t,x;\bolx_i) &= \;  \frac{1}{t_{i+1}-t} \frac{\mathbb{E}_{\mu}\left[ (X_{t_{i+1}}-x) F_i(t,x_i,x,X_{t_{i+1}}) \big| \boX_{t_i} = \bolx_i \right]}{\mathbb{E}_{\mu}\left[F_i(t,x_i,x,X_{t_{i+1}}) \big| \boX_{t_i} = \bolx_i \right]},
\end{align}
where 
\begin{align}
F_i(t,x_i,x,x_{i+1}) &=\; \exp\left(-\frac{|x_{i+1}-x|^2}{2(t_{i+1}-t)} + \frac{|x_{i+1}-x_{i}|^2}{2(t_{i+1}-t_i)}\right),
\end{align}
and $\E_\mu[\cdot]$ denotes the expectation under $\mu$. 
\end{prop}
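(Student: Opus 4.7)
The plan is to start from the explicit formula $\mra^*(t,x;\bolx_i) = \nabla_x \ln h_i(t,x;\bolx_i)$ established in Theorem \ref{thmSB}, and rewrite $h_i$ as a conditional expectation under the target measure $\mu$ rather than under the Wiener measure $\W$.

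First, I would use the Markov and Gaussian structure of Brownian motion under $\W$ to write, for $t\in[t_i,t_{i+1})$,
\begin{align}
h_i(t,x;\bolx_i) &= \int \rho(\bolx_i,x_{i+1},\ldots,x_N)\, p_{t_{i+1}-t}(x_{i+1}-x) \prod_{j=i+1}^{N-1} p_{t_{j+1}-t_j}(x_{j+1}-x_j)\, \d x_{i+1}\cdots\d x_N,
\end{align}
where $p_s$ denotes the $d$-dimensional centered Gaussian density of variance $s$, and $\rho = \mu/\mu_\Tc^W$. The key algebraic step is then to recognize that, in the product $\rho \cdot p_{t_{i+1}-t}(x_{i+1}-x)\prod_{j\geq i+1} p_{t_{j+1}-t_j}(x_{j+1}-x_j)$, the Gaussian factors with indices $j\geq i+1$ in the denominator of $\rho$ cancel against the transition densities, leaving only the ratio $p_{t_{i+1}-t}(x_{i+1}-x)/p_{t_{i+1}-t_i}(x_{i+1}-x_i)$, which up to a factor depending solely on $(t,\bolx_i)$ equals precisely $F_i(t,x_i,x,x_{i+1})$. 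The remaining $\bolx_i$-only factor is $1/q(\bolx_i)$, where $q$ is the marginal Brownian density at $(t_1,\ldots,t_i)$.

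After integrating out $x_{i+2},\ldots,x_N$ against $\mu$, this yields
\begin{align}
h_i(t,x;\bolx_i) &= \; \frac{\mu(\bolx_i)}{q(\bolx_i)} \left(\frac{t_{i+1}-t_i}{t_{i+1}-t}\right)^{d/2} \E_\mu\bigl[ F_i(t,x_i,x,X_{t_{i+1}}) \bigm| \boX_{t_i} = \bolx_i \bigr],
\end{align}
where $\mu(\bolx_i)$ denotes the marginal density of $(X_{t_1},\ldots,X_{t_i})$ under $\mu$. Taking $\nabla_x\ln$ eliminates both the $\bolx_i$-dependent prefactor and the $(t_{i+1}-t)^{-d/2}$ term, since neither depends on $x$, giving
\begin{align}
\mra^*(t,x;\bolx_i) \; = \; \nabla_x \ln \E_\mu\bigl[F_i(t,x_i,x,X_{t_{i+1}}) \bigm| \boX_{t_i} = \bolx_i \bigr].
\end{align}

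The final routine step is to commute $\nabla_x$ with the conditional expectation and use $\nabla_x F_i(t,x_i,x,x_{i+1}) = \frac{x_{i+1}-x}{t_{i+1}-t} F_i(t,x_i,x,x_{i+1})$, which delivers \eqref{driftexpress2}. The main technical obstacle is the change-of-measure bookkeeping in the first step: one must carefully verify that the Gaussian normalizations and the exponential quadratic forms combine so that the only surviving $x$-dependence is through $F_i$, with everything else either independent of $x$ or integrated against the joint law $\mu$. Once that simplification is identified, the gradient and Fubini manipulations are straightforward.
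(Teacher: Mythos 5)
Your proposal is correct and follows essentially the same route as the paper's proof: both rewrite the conditional Wiener expectation defining $\mra^*$ as an integral in which the Gaussian transition densities for $j\geq i+1$ cancel against the denominator of $\rho=\mu/\mu_\Tc^W$, leaving $F_i$ times the conditional density $\mu/\mu_i$ and an $x$-independent prefactor that disappears under $\nabla_x\ln$, before differentiating $F_i$ to obtain \eqref{driftexpress2}. The only cosmetic difference is that you make the prefactor $\frac{\mu_i(\bolx_i)}{q(\bolx_i)}\bigl(\frac{t_{i+1}-t_i}{t_{i+1}-t}\bigr)^{d/2}$ explicit where the paper absorbs it into an unspecified constant $C$.
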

 \begin{proof}
Fix $i$ $\in$ $\llbracket 0,N-1\rrbracket$, and $t$ $\in$ $[t_i,t_{i+1})$. From the expression of $\mu_\Tc^W$ in \eqref{muW}, we have
\begin{align} \label{aStarNewExpression}
&\mathbb{E}_{\mathbb{W}}\left[ \frac{\mu}{\mu_{\mathcal{T}}^W}(X_{t_1},\cdots,X_{t_N}) \Big\vert (X_{t_1},\cdots,X_{t_i})=(x_1,\cdots,x_i), \ X_t=x\right]\\
&=C \int_{} \frac{\mu}{\mu_{\mathcal{T}}^W}(x_1,\cdots,x_N) \exp\left(-\frac{|x_{i+1}-x|^2}{2(t_{i+1}-t)}\right) \prod_{j=i+1}^{N-1} \exp\left(-\frac{|x_{j+1}-x_j|^2}{2\Delta t_{j}}\right) \d x_{i+1}\cdots \d x_N\\
&=C\int_{} F_i(t,x_i,x,x_{i+1}) \frac{\mu(x_1,\ldots,x_N)}{\mu_i(x_1,\ldots,x_i)} \d x_{i+1}\cdots \d x_N 
\; = \; C \E_\mu\Big[ F_i(t,x_i,x,X_{t_{i+1}}) \big| \boX_{t_i} = \bolx_i  \Big],
\end{align}
where $C$ is a constant varying from line to line and depending only on $t$ and $\bolx_i$ $=$ $(x_1,\cdots,x_i)$, but not on $x$, 
and $\mu_i$ is the density of $(X_{t_1},\cdots,X_{t_i})$ under $\mu$, \textit{i.e.},
\begin{align}
    \mu_{i}(x_1,\cdots,x_i) = \int_{} \mu(x_1,\cdots,x_{N}) \d x_{i+1}\cdots \d x_N.
\end{align}
By plugging the new expression \eqref{aStarNewExpression} into $\mathrm{a}^*$ and differentiating with respect to $x$, we then get
\begin{align}
    \mathrm{a}^*(t,x;\bolx_i) = \frac{1}{t_{i+1}-t} \frac{\mathbb{E}_{\mu}\left[ (X_{t_{i+1}}-x) F_i(t,x_i,x,X_{t_{i+1}}) \big| \boX_{t_i} = \bolx_i \right]}{\mathbb{E}_{\mu}\left[F_i(t,x_i,x,X_{t_{i+1}}) \big| \boX_{t_i} = \bolx_i \right]}.
\end{align}
\end{proof}

\begin{rem}
 In the case where $\mu$ is the distribution arising from a Markov chain, i.e., in the form 
 $\mu(\d x_1,\ldots,x_N)$ $=$ $\prod_{i=1}^{N-1} \nu_i(x_i,\d x_{i+1})$, for some transition kernels $\nu_i$ on $\R^d$, then 
 the conditional expectations in \eqref{driftexpress2} will depend on the past values $\boX_{t_i}$ $=$ $(X_{t_1},\ldots,X_{t_i})$ 
 only via the last value $X_{t_i}$. 
\end{rem}

\section{Generative learning}

From Theorem \ref{thmSB}, we can run an Euler scheme for simulating the Schr\"odinger bridge diffusion, and then samples of the target distribution $\mu$. For that purpose, we need an accurate estimation of the drift terms, i.e., of the functions $\mra_i^*$, for $i$ $=$ $0,\ldots,N-1$. We propose several estimation methods. In the sequel, for a probability measure $\nu$ on $(\R^d)^N$, we denote by $\E_\nu[\cdot]$ the expectation under the distribution $\nu$. 

\subsection{Drift estimation}

\paragraph{Estimation of the density ratio.} This method follows the idea in \cite{wangetal21}. Denote by $\rho$ $=$ $\frac{\mu}{\mu_\Tc^W}$ the density ratio, and observe that the log-density ratio $\ln\rho$ minimizes over functions 
$r$ on $(\R^d)^N$ the logistic regression function
\begin{align}
 L_{logistic}(r) &= \; \E_{\mu}\Big[ \ln \big(1 + \exp(-r(\boX) \big)\Big]  
 + \E_{\mu_\Tc^W}\Big[ \ln \big(1 + \exp(r(\boX) \big) \Big].  
\end{align}
Therefore, given data samples $\boX^{(m)}$ $=$ $(X_{t_1}^{(m)},\ldots,X_{t_N}^{(m})$, $m$ $=$ $1,\ldots,M$ from $\mu$, and using samples $\boY^{(m)}$ from $\mu_\Tc^W$, we estimate the density ratio $\rho$ by 
\begin{align}
 \hat\rho &= \;    \exp(r_{\hat\theta})
\end{align}
where $r_{\hat\theta}$ is the neural network that minimizes the empirical logistic loss function: 
\begin{align}
\theta \; \mapsto \; \frac{1}{M} \sum_{m=1}^M \ln \big(1 + \exp(-r(\boX^{(m)}) \big) + \ln \big(1 + \exp(r(\boY^{(m)}) \big).     
\end{align}
By writing from \eqref{driftexpress1} the Schr\"odinger drift $\mra^*$ as
\begin{align}
\mra^*(t,x;\bolx_i) 
& = \; \frac{\E_{Y \sim \Nc(0,I_d)} 
\Big[ \rho \nabla_x \rho (\bolx_i,x+\sqrt{t_{i+1}-t} Y,\ldots,x+\sqrt{t_{N}-t} Y) \Big] }
{ \E_{Y \sim \Nc(0,I_d)} \Big[ \rho(\bolx_i,x+\sqrt{t_{i+1}-t} Y,\ldots,x+\sqrt{t_{N}-t} Y) \Big]}, \label{aistar}
\end{align}
for $t$ $\in$ $[t_i,t_{i+1})$, we obtain an estimator of $\mra^*$ by plugging into \eqref{aistar} the estimate $\hat\rho$, and $r_{\hat\theta}$ of 
$\rho$, and  $\ln\rho$, and then computing the expectation with Monte-Carlo approximations from samples in $\Nc(0,I_d)$. 
Notice that this method is very costly as it requires in addition to the training of the neural networks for estimating the density ratio, another Monte-Carlo sampling for estimating finally the drift.

\paragraph{Kernel estimation of the drift.} In order to overcome the computational issue of the above estimation method, we propose an alternative approach that relies on the representation  of the drift term in Proposition \ref{propdrift}.  
Indeed, the key feature of the formula \eqref{driftexpress2} is that it involves (conditional) expectations under the target distribution $\mu$, which is amenable to direct estimation using data samples. For the approximation of the conditional expectation, we can then use the classical kernel methods.

From data samples $\boX^{(m)}$ $=$ $(X_{t_1}^{(m)},\ldots,X_{t_N}^{(m)})$, $m$ $=$ $1,\ldots,M$ from $\mu$, the Nadaraya-Watson estimator of the drift function is given by 
\begin{align} \label{estimdrift} 
\hat\mra(t,x;\bolx_i) & = \;  \frac{1}{t_{i+1}-t} 
\frac{\Sum_{m=1}^M (X_{t_{i+1}}^{(m)} - x) F_i(t,X_{t_i}^{(m)},x,X_{t_{i+1}}^{(m)}) \Prod_{j=1}^i K_h(x_j-X_{t_j}^{(m)})} 
{\Sum_{m=1}^M  F_i(t,X_{t_i}^{(m)},x,X_{t_{i+1}}^{(m)}) \Prod_{j=1}^i K_h(x_j-X_{t_j}^{(m)})}, 
\end{align}
for $t$ $\in$ $[t_i,t_{i+1})$, $\bolx_i$ $\in$ $(\R^d)^i$, $x$ $\in$ $\R^d$, $i$ $=$ $0,\ldots,N-1$, where $K_h$ is a kernel, i.e., a non-negative real-valued integrable and symmetric function on $\R^d$, with bandwith $h$ $>$ $0$. 
Common kernel function is the Gaussian density function, but we shall use here for lower time complexity reason, the quartic kernel $K_h(x)$ $=$ $\frac{1}{h}K(\frac{x}{h})$ with 
\begin{align}
 K(x) &=   (1 - |x|^2) 1_{|x|\leq 1}.  
\end{align}

\paragraph{LSTM  network approximation of the path-dependent drift.}
The conditional expectations in the numerator and denominator of the drift terms can alternately be approximated by neural networks. In order to achieve this, we need a neural network architecture that fits well with the path-dependency of the drift term, i.e., the 
{\it a priori} non Markov feature of the data time series distribution $\mu$. We shall then consider a combination of feed-forward and LSTM (Long Short Term Memory) neural network. 
\begin{figure}[H]
    \centering
    \includegraphics[width=14cm,height=3cm]{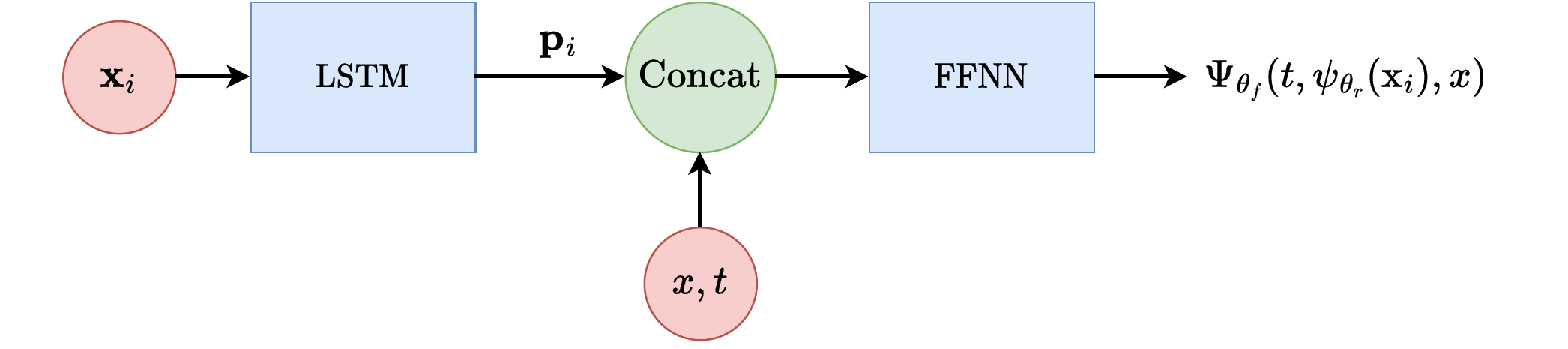}
    \caption{Architecture of the neural network}
    \label{fig:my_label}
\end{figure}

For $i$ $=$ $0,\ldots,N-1$, the conditional expectation function  in the numerator: 
\begin{align}
& \quad  (t,\bolx_i,x) \in [t_i,t_{i+1})\times(\R^d)^i\times\R^d \\
\longmapsto & \quad  \E_\mu \Big[ (X_{t_{i+1}}-X_t) F(t,X_{t_i},X_t,X_{t_{i+1}}) \big| (\boX_{t_i},X_t) = (\bolx_i,x) \Big],  
\end{align}
is approximated by  $\Psi_{\theta_f}(t,p_{t_{i}},x)$, where $\Psi_{\theta_f}$ $\in$ $NN_{d+k+1,d}$ is a feed-forward neural network with input dimension $1+k+d$, and output dimension $d$, and $p_{t_{i}}$ is an output vector of dimension $k$ from an LSTM network, i.e., 
$p_{t_i}$ $=$ $\psi_{\theta_r}(\bolx_i)$ with $\psi_{\theta_r}$ $\in$ ${\rm LSTM}_{i,d,k}$ at time $t_i$. This neural network is trained by minimizing over the parameter $\theta$ $=$ $(\theta_f,\theta_r)$ the quadratic loss function
\begin{align}
L(\theta) &= \; \sum_{i=0}^{N-1} \hat\E \Big| (X_{t_{i+1}}-X) F(\tau,X_{t_i},X,X_{t_{i+1}}) -   
\Psi_{\theta_f}(\tau,\psi_{\theta_r}(\boX_{t_{i}}),X) \Big|^2. 
\end{align}
 Here $\hat\E$ is the empirical loss expectation where $(X_{t_1},\ldots,X_{t_N})$ are sampled from the data distribution $\mu$, 
 $\tau$ is sampled according to an uniform law on $[t_i,t_{i+1})$, and $X$ is sampled e.g. from a Gaussian law with mean $X_{t_i}$ for $i$ $=$ $0,\ldots,N-1$.  The conditional expectation function in the denominator is similarly approximated.

The output of this neural network training yields an approximation:
\begin{align}
 t \in [0,T), \bolx \in (\R^d)^{\eta(t)}, x \in \R^d &  \longmapsto \; \hat\mra(t,x;\bolx),
\end{align}
of the drift term function, which is then used for generating samples $(X_{t_1},\ldots,X_{t_N})$ of $\mu$ from the simulation of the diffusion:
\begin{align} \label{SDESBTS} 
\d X_t &= \; 
\hat\mra(t,X_t;\boX_{\eta(t)}) \d t + \d W_t, \; X_0 = 0. 
\end{align}

\subsection{Schr\"odinger bridge time series algorithm}

From the estimator $\hat\mra$ of the drift path-dependent function, we can now simulate the SB SDE \eqref{SDESBTS} by an Euler scheme. Let $N^\pi$ be the number of uniform time steps between two consecutive observation dates $t_i$ and  $t_{i+1}$, for $i$ $=$ $0,\ldots,N-1$, and $t_{k,i}^\pi$ $=$ $t_i$ $+$ $\frac{k}{N^\pi}$, $k$ $=$ $0,\ldots,N^\pi -1$, the associated time grid. 
The pseudo-code  of the Schr\"odinger bridge time series (SBTS)  algorithm is  described in Algorithm \ref{AlgoSBTS}.

\vspace{3mm}

\begin{algorithm2e}[H] 
\DontPrintSemicolon 
\SetAlgoLined 
\vspace{1mm}
{\bf Input}: data samples of time series $(X_{t_1}^\mathrm{(m)},\cdots,X_{t_N}^\mathrm{(m)})$, $m=1,\ldots,M$, and $N^\pi$.  

{\bf Initialization}: initial state $x_0=0$; \\
\For{$i$ $=$ $0,\ldots,N-1$}
{Initialize state $y_0=x_i$; \\
\For{$k$ $=$ $0,\ldots,N^\pi-1$} 
 {Compute $\hat\mra(t^\pi_{k,i},y_k;\bolx_i)$ e.g. by kernel estimator \eqref{estimdrift};\\
Sample $\varepsilon_k \in \mathcal{N}(0,1)$ and compute 
\begin{align}
y_{k+1} &= \;  y_k + \frac{1}{N^\pi} \hat\mra(t^\pi_{k,i},y_k;\bolx_i)  + \frac{1}{\sqrt{N^\pi}} \varepsilon_k,
\end{align}
}
{
Set $x_{i+1}=y_{N^\pi}$. 
}
}
{\bf Return}: $x_1,\cdots,x_N$
\caption{SBTS  Simulation  \label{SBSimulation} } \label{AlgoSBTS}
\end{algorithm2e}

\section{Numerical experiments}

In this section, we demonstrate the effectiveness of our SBTS algorithm on several examples of time series models, as well as on real data sets for an application to deep hedging.   
The algorithms are performed on a computer with the following characteristics:
Intel(R) i7-7500U CPU @ 2.7GHz, 2 Core(s).

\subsection{Evaluation metrics}

In addition to visual plot of data vs generator samples path, we use some metrics to evaluate the accuracy of our generators:  
 
\begin{itemize}
\item {\it Marginal metrics} for quantifying how well are the marginal distributions from the generated samples compared to the data ones.  These include 
\begin{itemize}
\item Classical statistics like mean, $95\%$ and $5\%$ percentiles
\item Kolmogorov-Smirnov test: we compute the $p$-valued, and when $p$ $>$ $\alpha$ (usually $5\%$), we do not reject the null-hypothesis (generator came from data of reference distribution) 
\end{itemize}
\item {\it Temporal dynamics} metrics for quantifying the ability of the generator to capture the time structure of the time series data. We compute 
the empirical distribution of the quadratic variation: $\sum_i |X_{t_{i+1}}-X_{t_i}|^2$.
\item  {\it  Correlation structure} for evaluating the ability of the generator to capture the multi-dimensional structure of the time series. We shall compare the empirical covariance or correlation  matrix induced by the generator SBTS and the ones from the data samples.
\end{itemize}

\subsection{Toy autoregressive  model of time series}

We consider the following toy autoregressive (AR) model: 
\begin{equation}
\begin{cases}
    X_{t_1} & = \;  b +  \varepsilon_1,     \\
    X_{t_2} & =  \; \beta_1 X_{t_1} + \varepsilon_2,  \\
    X_{t_3} & = \;  \beta_2 X_{t_2} + \sqrt{|X_{t_1}|} + \varepsilon_3,  
    \end{cases}
\end{equation}
where the noises $\varepsilon_i$ $\sim$ $\mathcal{N}(0,\sigma_i^2)$, $i$ $=$ $1,\ldots,3$ are mutually independent. The model parameters are $b=0.7$, $\sigma_1=0.1$, $\sigma_2=\sigma_3=0.05$ and $\beta_1=\beta_2=-1$.

We use samples of size $M$ $=$ $1000$ for simulated data of the AR model. The drift of the SBTS diffusion is 
estimated with a kernel of bandwith $h$ $=$ $0.05$, and simulated from euler scheme with $N^\pi$ $=$ $100$. The runtime for generating $500$ paths of SBTS is $8$ seconds.


In Figure \ref{fig:plotAR}, we plot the empirical distribution of each pair $(X_{t_i},X_{t_j})$ from the AR model, and from the generated SBTS. We also show the marginal empirical distributions.  
Table \ref{table:pAR} presents the marginal metrics for the AR model and generator ($p$-value and percentiles at level $5\%$ and $95\%$). In Table \ref{table:corAR}, we give the difference between the empirical correlation from generated samples and AR data samples.

\begin{figure}[H]
    \centering
    \includegraphics[width=4.75cm,height=4.75cm]{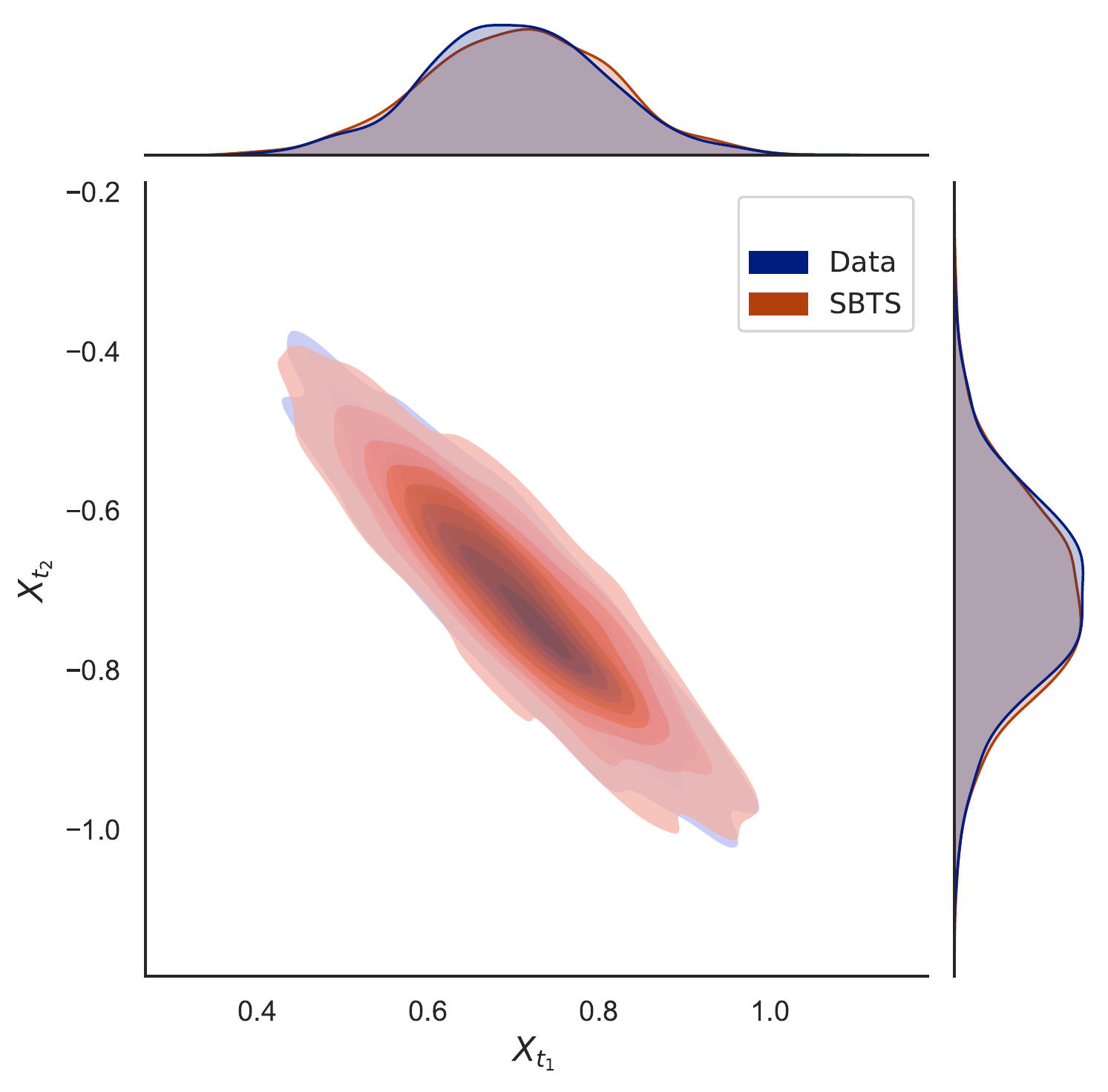}\includegraphics[width=4.75cm,height=4.75cm]{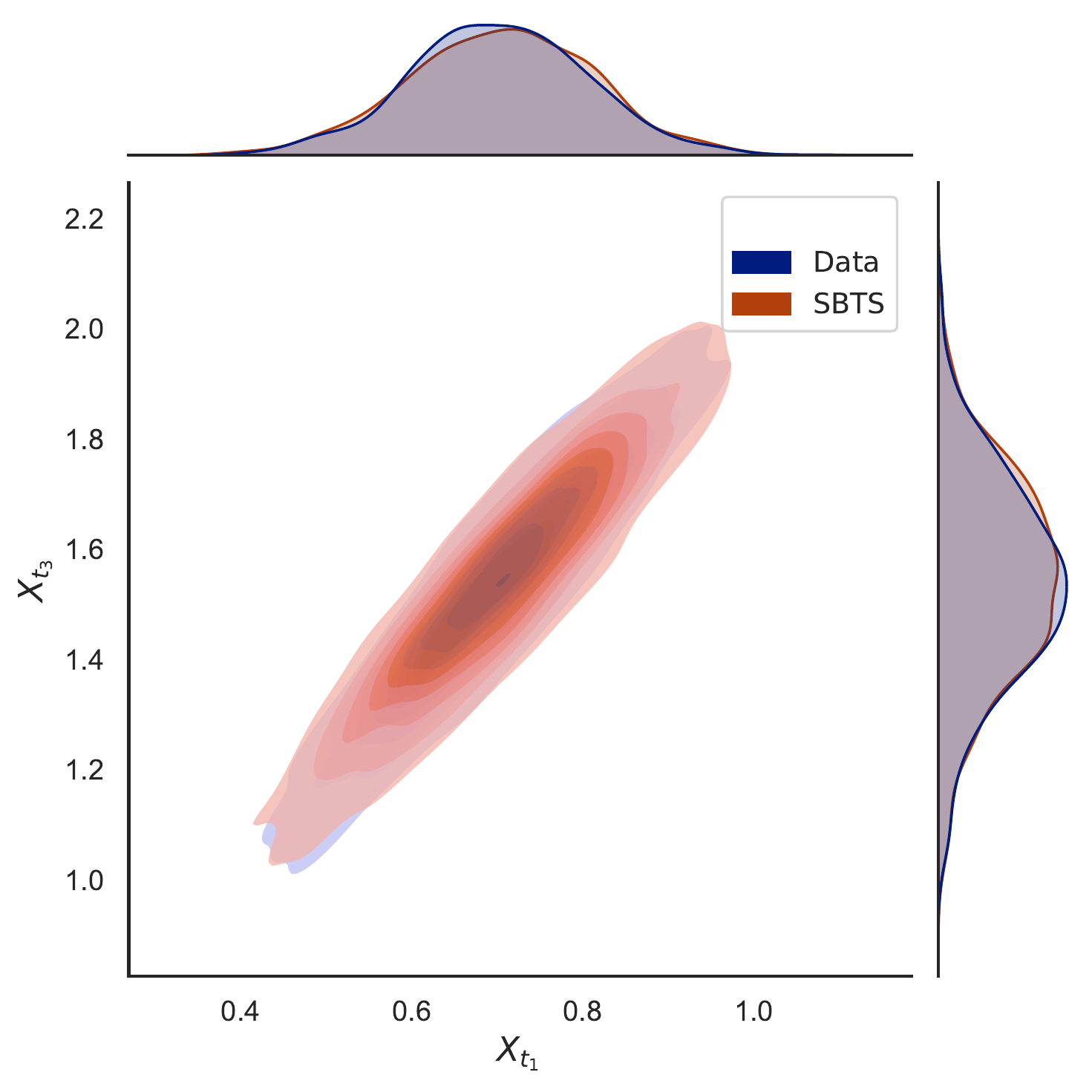} 
    \includegraphics[width=4.75cm,height=4.75cm]{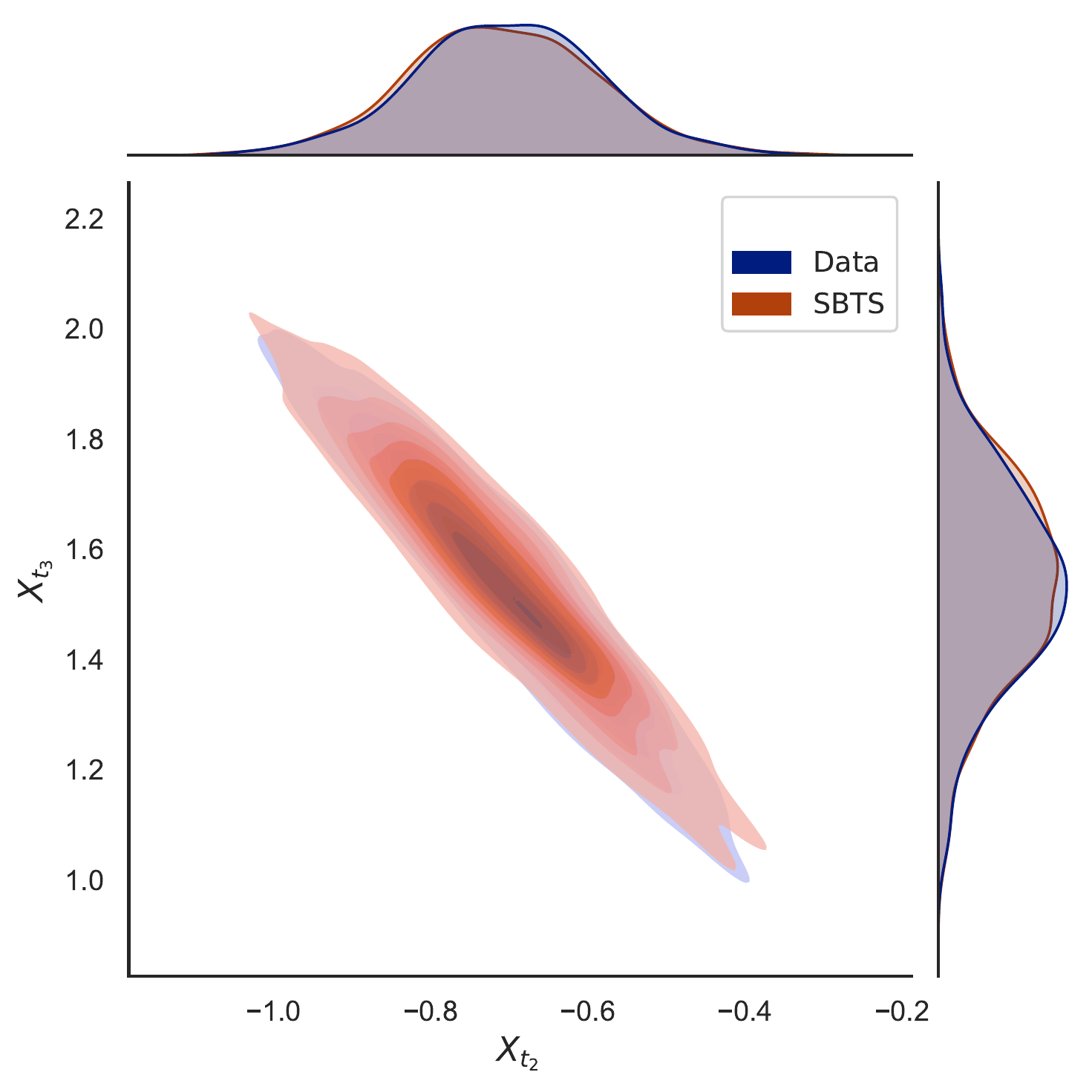}
    \caption{\footnotesize{Comparison between the true and generated distribution for each couple $(X_{t_i},X_{t_j})$ with $i,j\in\llbracket1,3 \rrbracket$ with $i\ne j$}}
    \label{fig:plotAR}
\end{figure}

\begin{table}[H]
\centering
\begin{tabular}{c|l|l|l|l|l|}
\cline{2-6}
\multicolumn{1}{l|}{}            & \multicolumn{1}{c|}{\textbf{p-value}} & \multicolumn{1}{c|}{\textbf{$q_{5}$}} & \multicolumn{1}{c|}{\textbf{$\tilde{q}_{5}$}} & \multicolumn{1}{c|}{\textbf{$q_{95}$}} & \multicolumn{1}{c|}{\textbf{$\tilde{q}_{95}$}} \\ \hline
\multicolumn{1}{|c|}{$X_{t_1}$} & 0.98    & 0.535  & 0.528  & 0.855  & 0.861 \\ 
\hline
\multicolumn{1}{|c|}{$X_{t_2}$} & 0.74    & -0.873 & -0.861 & -0.516 & -0.514 \\ 
\hline
\multicolumn{1}{|c|}{$X_{t_3}$} & 0.90    & 1.243  & 1.251  & 1.808  & 1.793 \\ 
\hline
\end{tabular}
\caption{\footnotesize{Marginal metrics for AR model and generator ($\tilde q$ for percentile)}}
\label{table:pAR}
\end{table}

\begin{table}[H]
\centering
\begin{tabular}{l|l|l|l|}
\cline{2-4}
                        & $X_{t_1}$ & $X_{t_2}$ & $X_{t_3}$     \\ \hline
\multicolumn{1}{|l|}{$X_{t_1}$} & 0         & 0.014 & -0.01 \\ \hline
\multicolumn{1}{|l|}{$X_{t_2}$} & 0.014     & 0     & 0.013 \\ \hline
\multicolumn{1}{|l|}{$X_{t_3}$} & -0.01 & 0.013 & 0     \\ \hline
\end{tabular}
\caption{\footnotesize{Difference between empirical correlation from generated samples and reference samples}}
\label{table:corAR}
\end{table}

\subsection{GARCH Model}

We consider a GARCH model:
\begin{equation}
\begin{cases}
    X_{t_{i+1}} &= \; \sigma_{t_{i+1}} \varepsilon_{t_{i+1}}\\
    \sigma^2_{t_{i+1}} &= \;  \alpha_0 + \alpha_1  X^2_{t_i} + \alpha_2 X^2_{t_{i-1}}, \quad i=1,\ldots,N, 
 \end{cases}   
\end{equation}
with $\alpha_0 = 5$, $\alpha_1=0.4$, $\alpha_2=0.1$, and the noises $\varepsilon_{t_i}$ $\sim\mathcal{N}(0,0.1)$, 
$i$ $=$ $1,\ldots,N$, are i.i.d. The size of the time series is $N$ $=$ $60$. 

The hyperparameters for the training and generation of SBTS are $M$ $=$ $1000$, $N^\pi$ $=$ $100$, and a bandwith for the kernel estimation $h$ $=$ $0.2$ (larger than for the AR model since by nature the GARCH process is more "volatile"). The runtime for generating $1000$ paths is $120$ seconds. 


In Figure \ref{fig:garch_paths}, we plot four sample paths of the GARCH process to be compared with four sample paths of the SBTS. Figure \ref{fig:garch1N} represents samples plot of the joint distribution between $(X_{t_1}$ and the terminal value $X_{t_N})$ of the time series. Figure \ref{fig:toy_model_2_pvalue_dist} provides some metrics to evaluate the performance of SBTS. On the left, we represent the $p$-value for each of the marginals of the generated SBTS. On the right, we compute for each marginal index $i$ $=$ $1,\ldots,N=60$, the difference between $\rho_i$ and 
$\hat\rho_i$ where $\rho_i$ (resp. $\hat\rho_i$)  is the sum over $j$ of the empirical correlation between  $X_{i}$ and $X_{t_j}$ from GARCH (resp. generated SBTS), and plot its mean and standard deviation.

\begin{figure}[H]
    \centering
    \includegraphics[width=12cm,height=4cm]{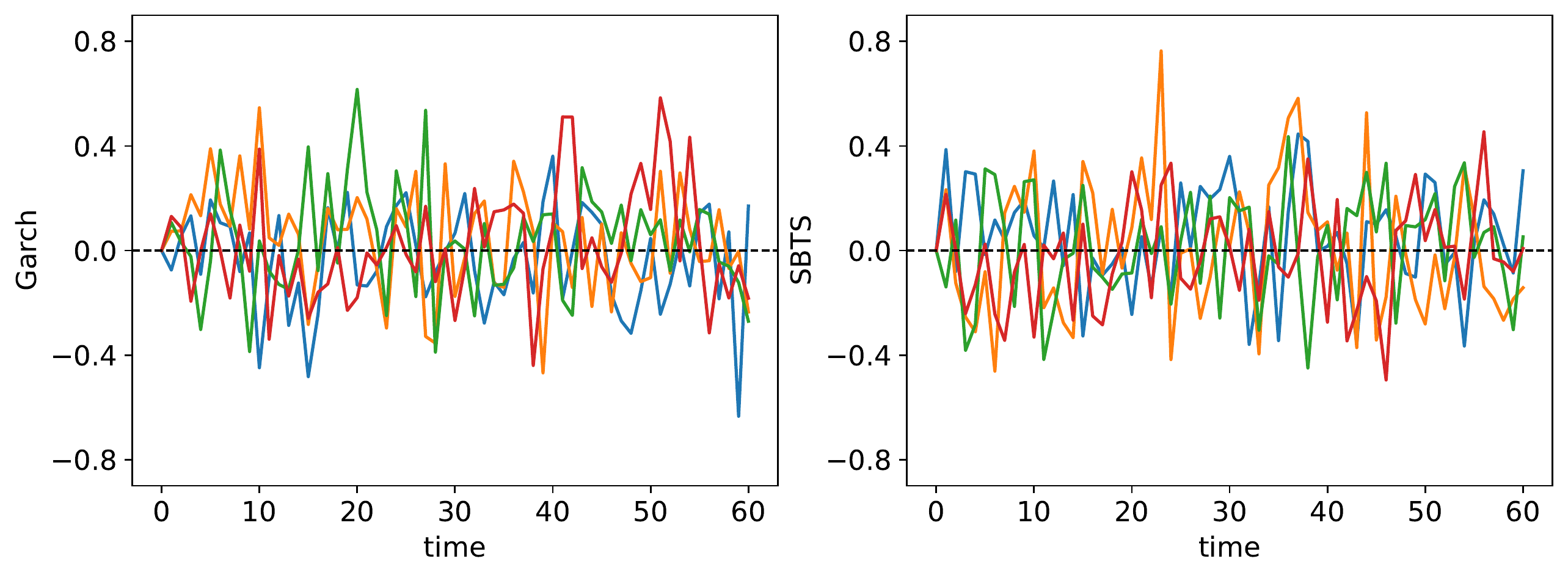}
    \caption{\footnotesize{Samples path of reference GARCH (left) and generator SBTS (right)}}
    \label{fig:garch_paths}
\end{figure}

\begin{figure}[H] 
    \centering
    \includegraphics[width=7cm,height=4cm]{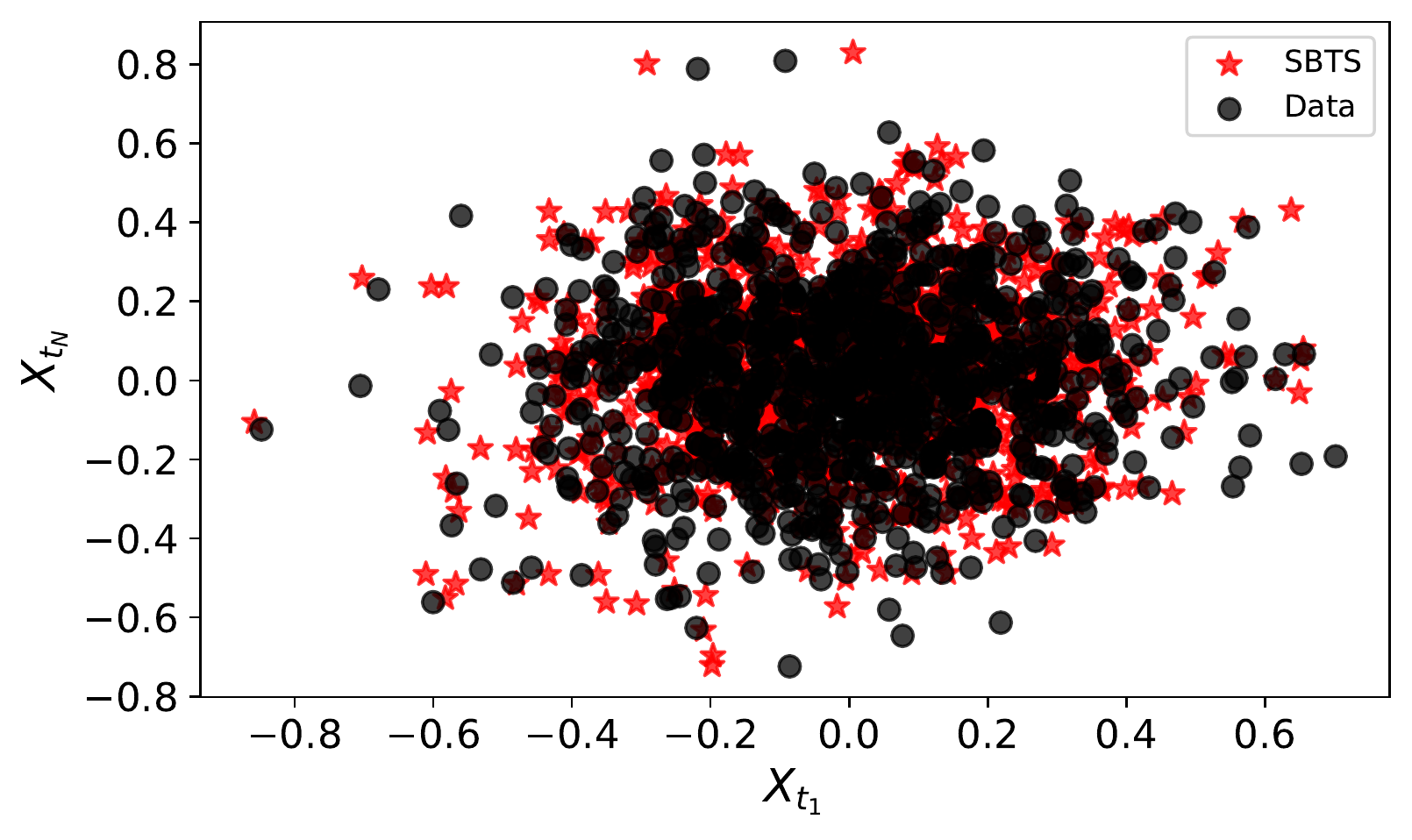}
    \caption{\footnotesize{Samples plot of the joint distribution $(X_{t_1},X_{t_N})$}} 
    \label{fig:garch1N}
\end{figure}

\begin{figure}[H]
    \centering
    \includegraphics[width=7cm,height=4cm]{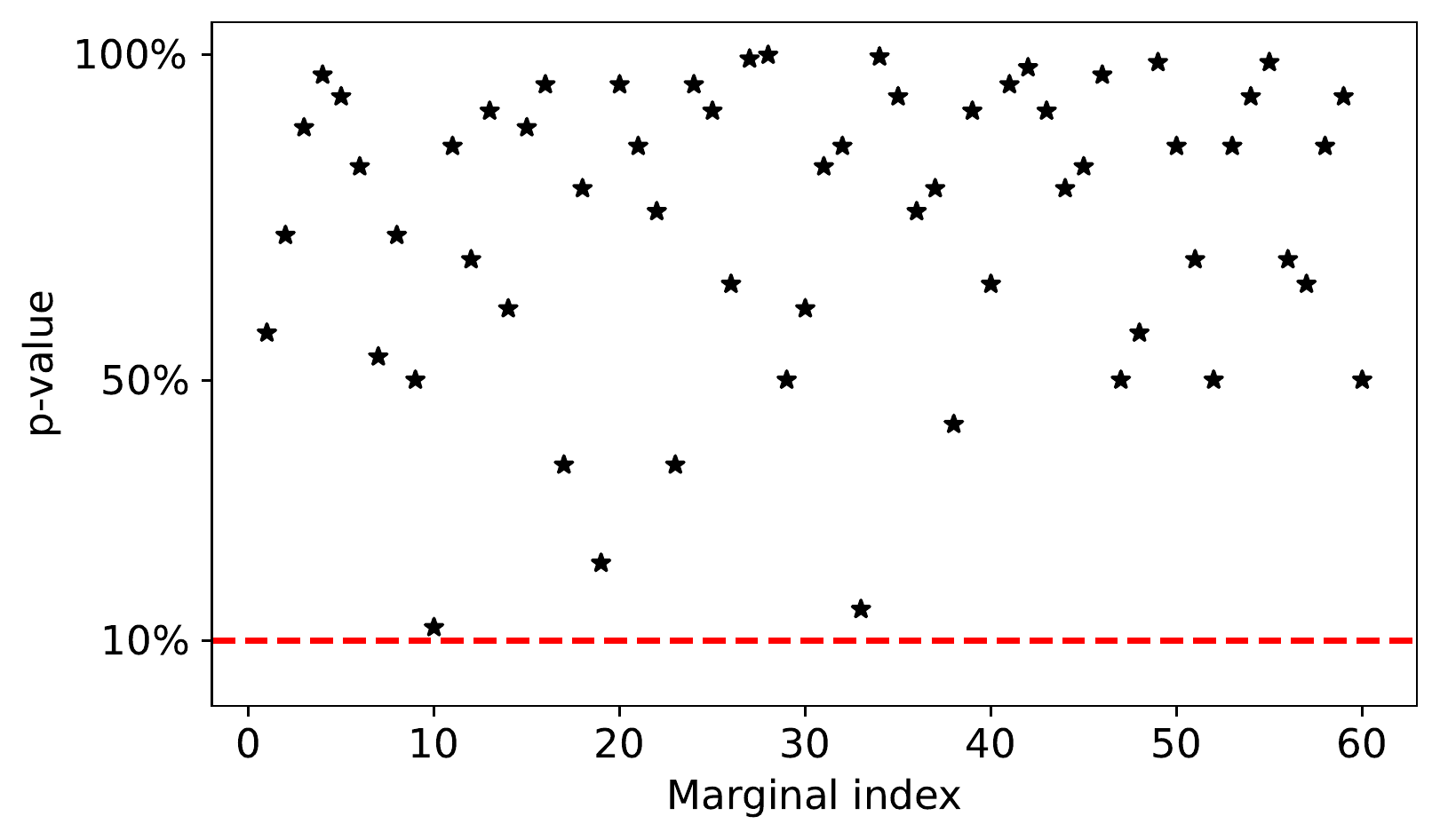}
    \includegraphics[width=7cm,height=4cm]{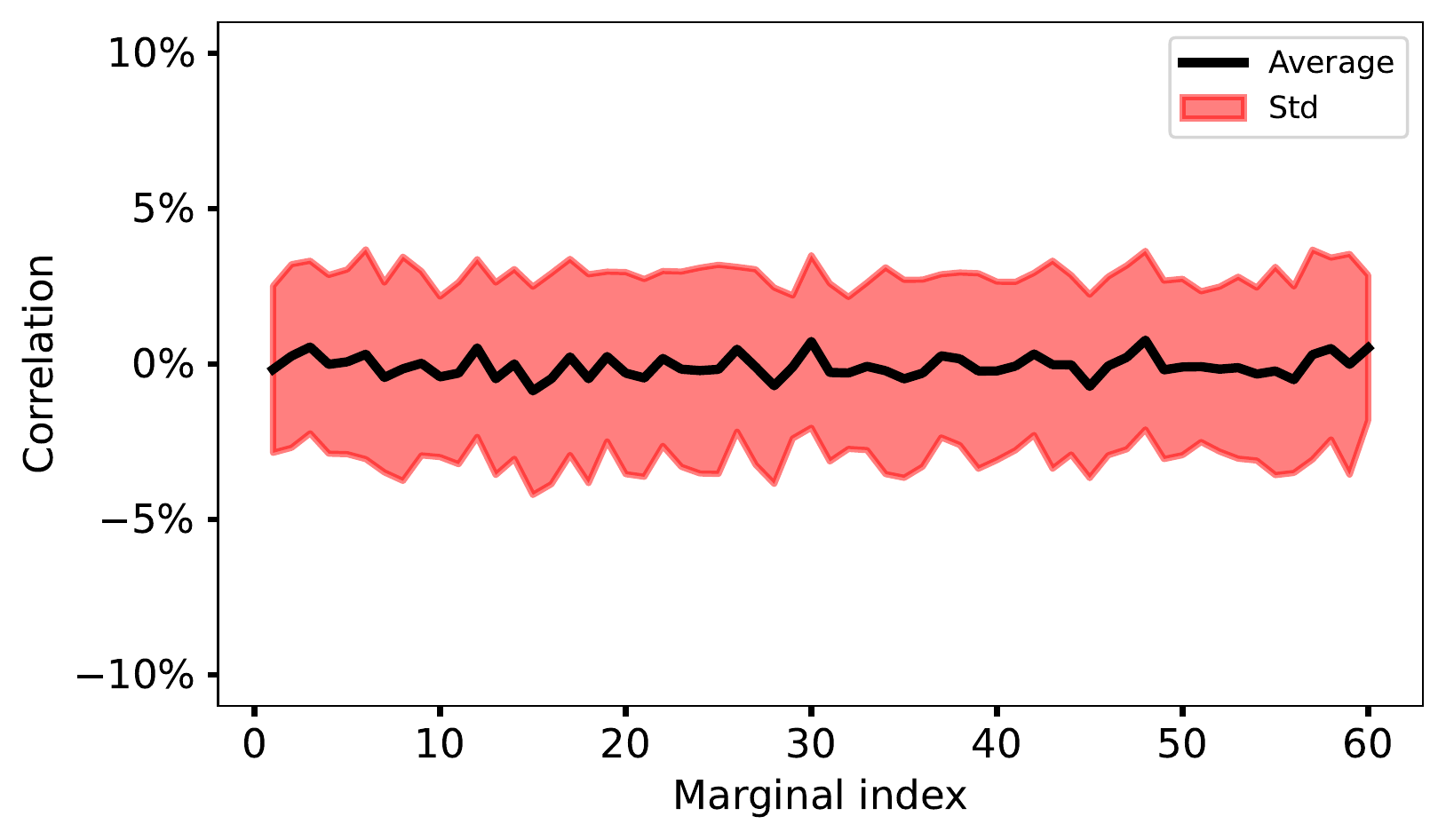}
     \caption{\footnotesize{{\it Left}: $p$-value for the marginals $X_{t_i}$. {\it Right}: Difference between the term-by-term empirical correlation from generated samples and reference samples. 
     }}
    \label{fig:toy_model_2_pvalue_dist}
\end{figure}

\subsection{Fractional Brownian Motion}

We consider a fractional Brownian motion (FBM) with Hurst index $H$ that measures the roughness of this Gaussian process. We plot in Figure  \ref{fig:FBM H=0.2} four samples path of FBM with $H$ $=$ $0.2$, and samples paths generated by SBTS. The generator is trained with $M$ $=$ $1000$ sample paths, and 
the hyperparameters used for the simulation are $N^\pi$ $=$ $100$, with bandwith $h$ $=$ $0.05$ for the kernel estimation of the Schr\"odinger drift. The runtime for $1000$ paths is $120$ seconds.



\begin{figure}[H]
    \centering
    \includegraphics[width=13cm,height=5cm]{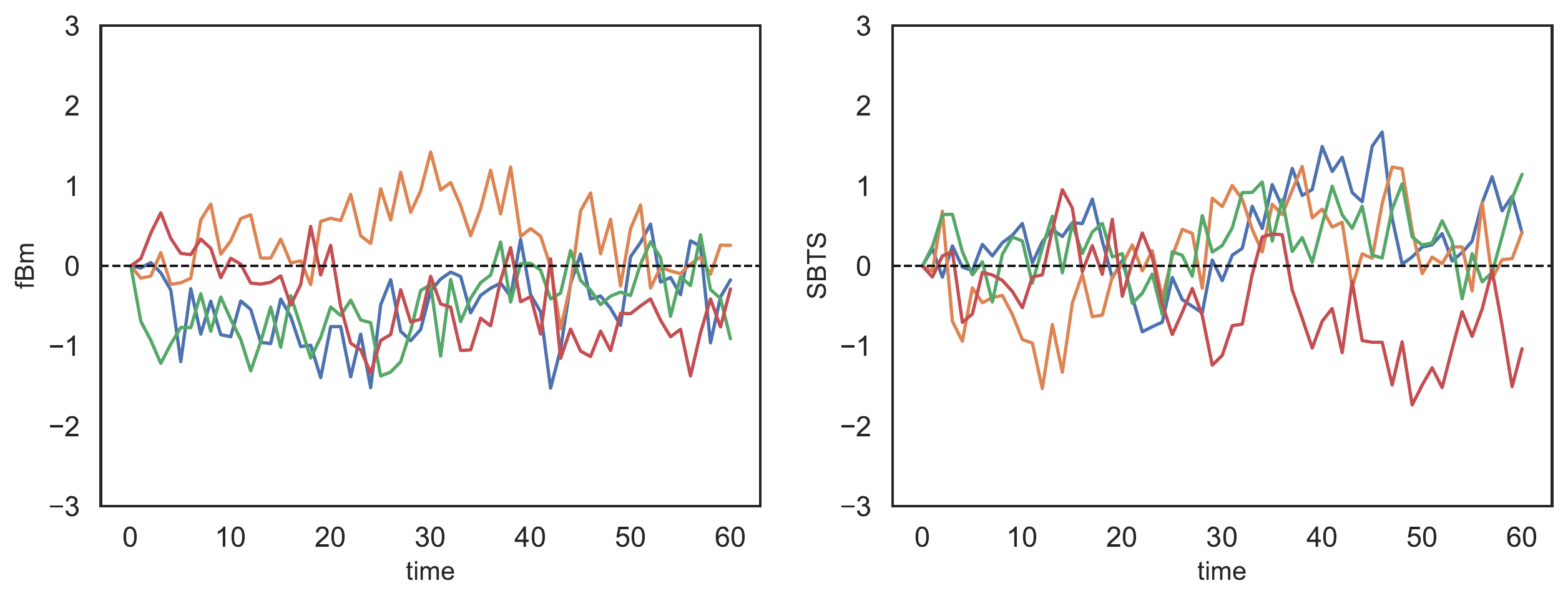}
    \caption{\footnotesize{Samples path of reference FBM  (left) and generator SBTS (right)}}
    \label{fig:FBM H=0.2}
\end{figure}

Figure \ref{fig:FBMcov} represents the covariance matrix of $(X_{t_1},\ldots,X_{t_N})$ for $N$ $=$ $60$ of the FBM and of the generated SBTS, while we plot in Figure \ref{fig:quadratic_var} the empirical distribution of the quadratic variation  $\sum_{i=0}^{N-1} |X_{t_{i+1}}-X_{t_i}|^2$ for the FBM and the SBTS.

\begin{figure}[H]
    \centering
    \includegraphics[width=13cm,height=6.5cm]{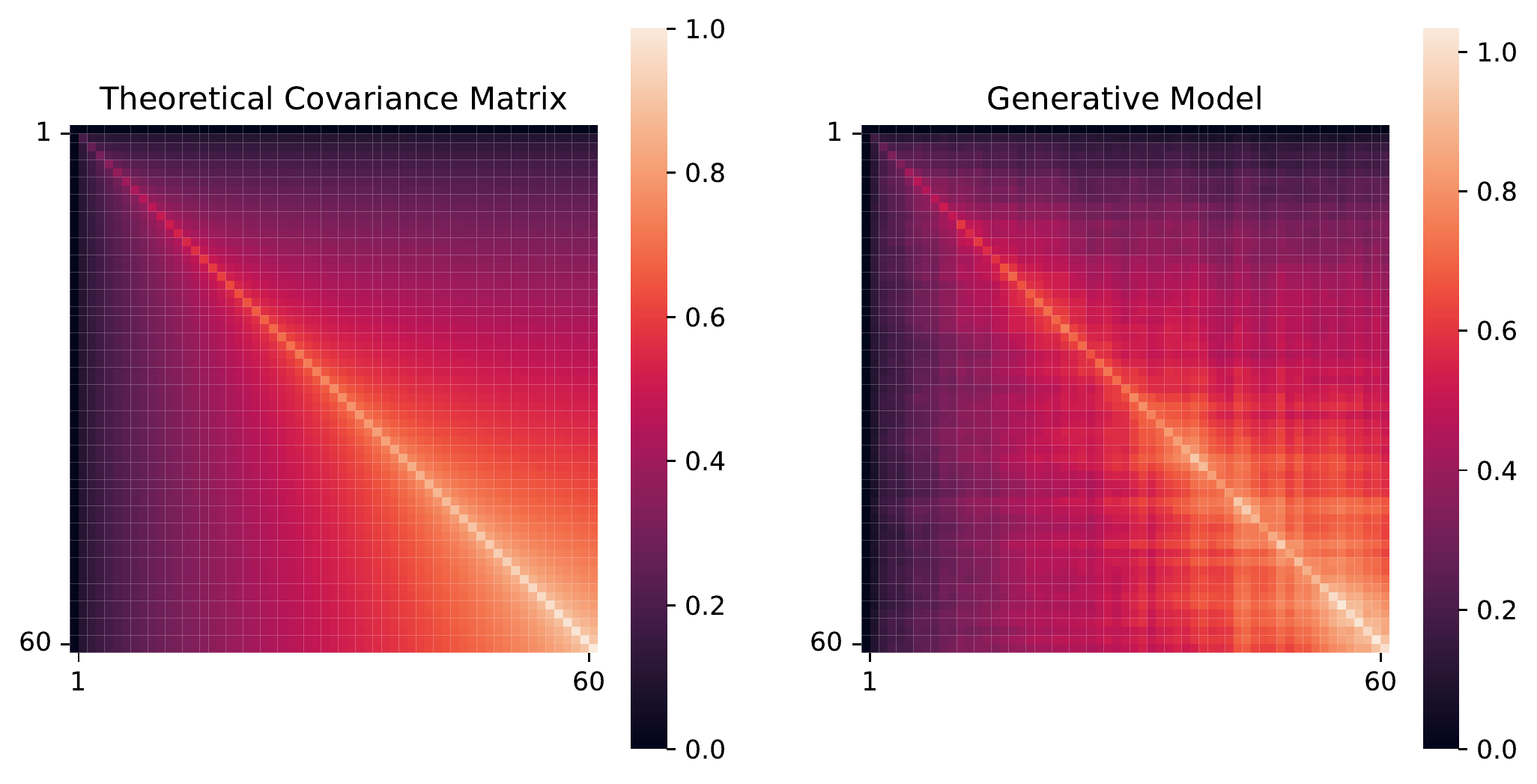}
    \caption{\footnotesize{Covariance matrix for reference FBM and SBTS}}
    \label{fig:FBMcov}
\end{figure}

\begin{figure}[H]
    \centering
    \includegraphics[width=13cm,height=6.5cm]{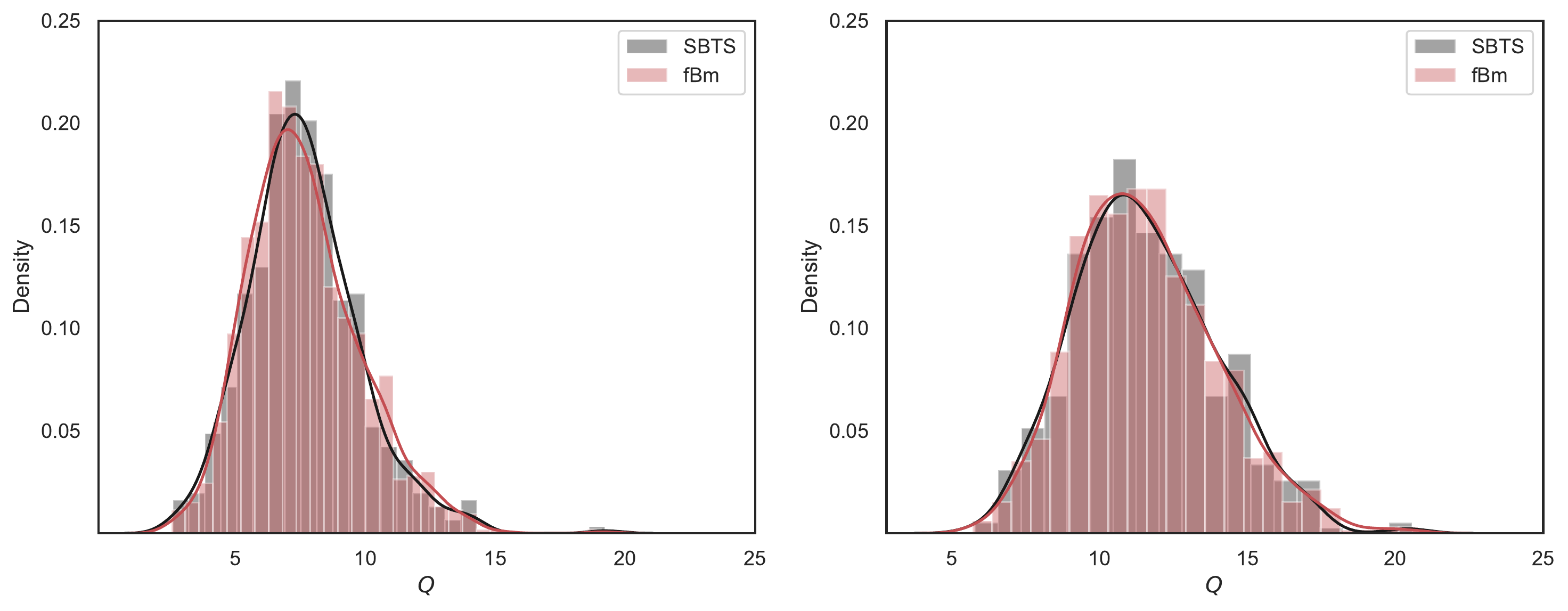}
    \caption{\footnotesize{Quadratic variation distribution for $N=30$ (left), $N=60$ (right) and $T=t_N=1$}}
    \label{fig:quadratic_var}
\end{figure}

Finally, we provide estimate of  the Hurst index from our generated SBTS with the standard estimator (see e.g. \cite{gaietal19}) given by: 
\begin{align}
 \hat H &= \; \frac{1}{2} \Bigg[ 1 - \frac{\log \Big( \Sum_{i=0}^{N-1} |X_{t_{i+1}} - X_{t_i}|^2\Big)}{\log N} \Bigg].  
\end{align}

For $N$ $=$ $60$, we get: 
$\hat H$ $=$ $0.2016$, Std $=$ $0.004$.

\subsection{Application to deep hedging on real-data sets}


In this paragraph, we use generated time series for applications to risk management, and notably the pricing of derivatives and the computation of associated hedging strategies via deep hedging approach. We use samples of historical
data for generating by SBTS new synthetic time series samples. We then compute deep hedging strategies that are trained from these synthetic samples, and we compare with the PnL and the replication error based on historical dataset. The general backtest procedure is illustrated in Figure \ref{fig:deepHedgingBacktest}.

\begin{figure}[H]
    \centering
    \includegraphics[width=13cm,height=5cm]{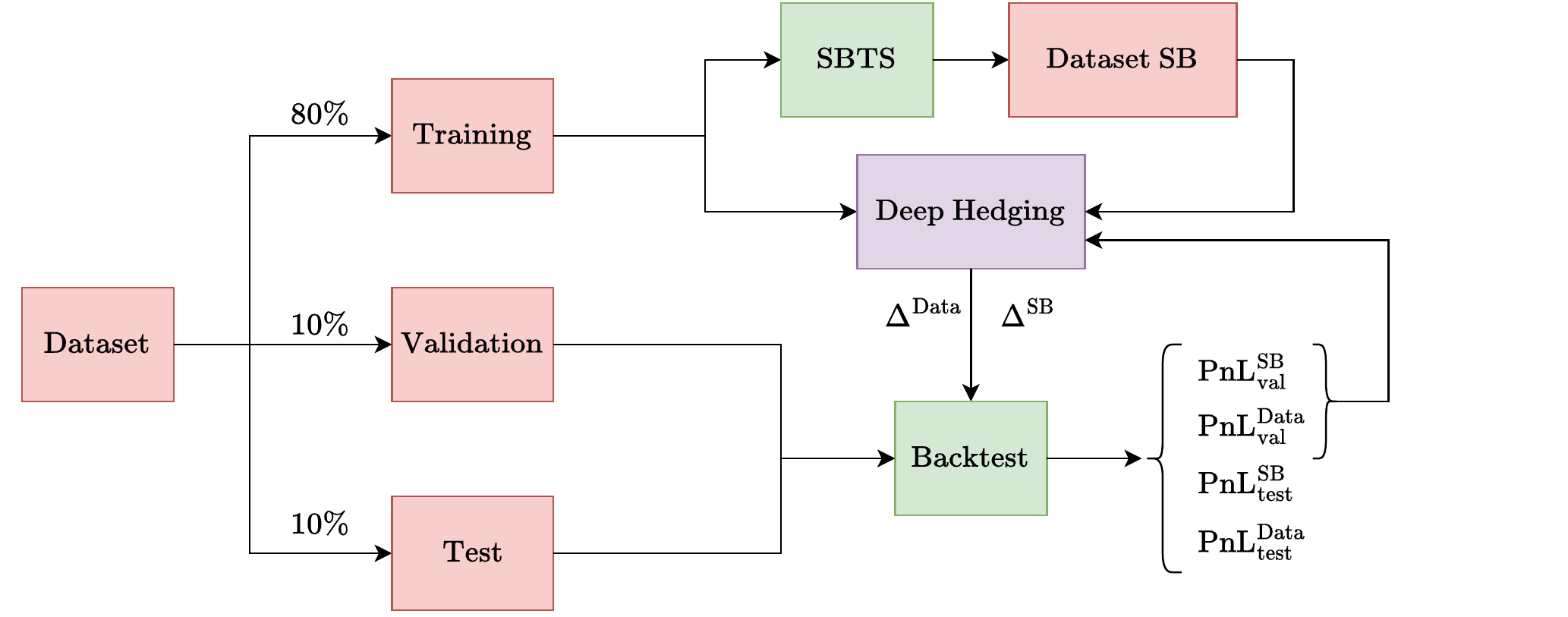}
    \caption{\footnotesize{Procedure of backtest for deep hedging}}
    \label{fig:deepHedgingBacktest}
\end{figure}


We consider stock price $S$ from the company {\it Apple} with  data (ticker is AAPL) from january 1, 2010 to january 30, 2020, and produce $M$ $=$ $2500$  samples of $N$ $=$  $60$ successive days, with a sliding window. The hyperparameters for the generation of SBTS synthetic samples are $N^\pi$ $=$ $100$, bandwith $h$ $=$ $0.05$.


We plot in Figure \ref{fig:AAPL_paths} four sample paths of the SBTS diffusion to be compared with the real ones from {\it Apple}. We illustrate the excess of kurtosis of the real data by plotting in Figure \ref{fig:fat-tailed-returns} 
the tail distribution for the return $R_{t_i}$ $=$ $\frac{S_{t_{i+1}}}{S_{t_i}}-1$: $x$ in $\log$-scale $\mapsto$ $\P[|R| \geq x]$, and found that the excess of kurtosis of real data is $1.96$, to be compared with the one from SBTS, and equal to $2.34$. 
Figure \ref{fig:AppleQV} represents the empirical distribution of the {\it Apple} time series data vs SBTS, while Figure  \ref{fig:Applecov} shows their covariance matrices.

\begin{figure}[H]
    \centering
    \includegraphics[width=12cm,height=4cm]{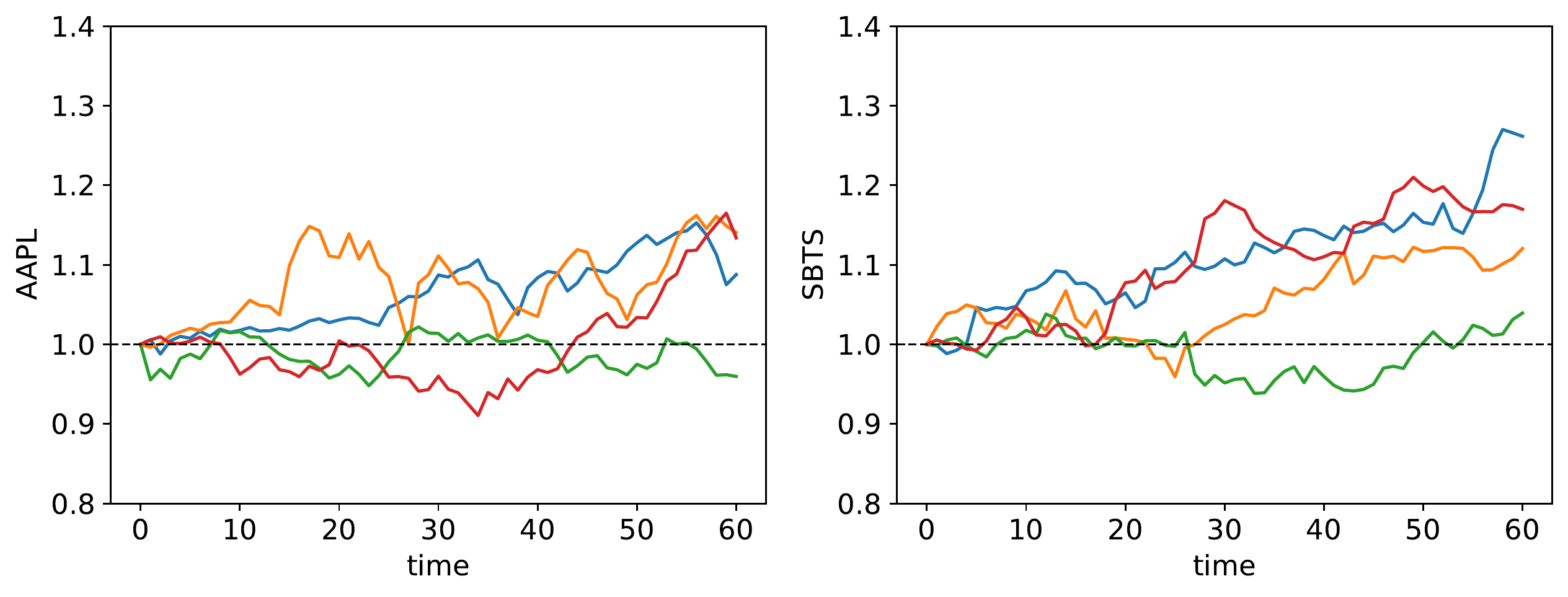}
    \caption{\footnotesize{Four paths  generated by Schrodinger bridge({\it Right}) vs real ones ({\it Left})}}
    \label{fig:AAPL_paths}
\end{figure}

\begin{figure}[H]
    \centering
    \includegraphics[width=7.7cm,height=5cm]{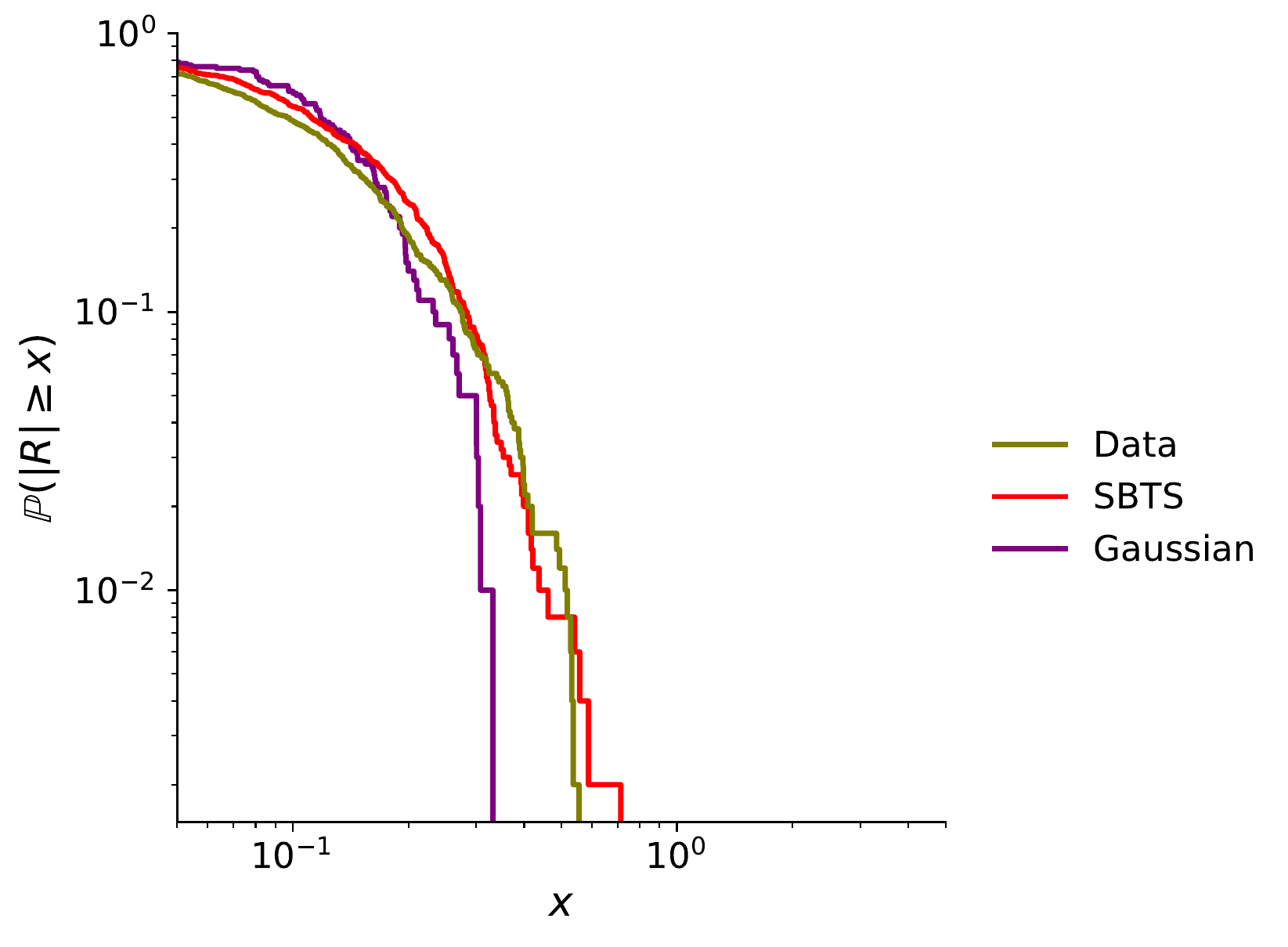}
    \caption{\footnotesize{Plot of  tail distribution for the return:  $x$ in $\log$-scale $\mapsto$ 
    $\P[|R| \geq x]$.  Excess of kurtosis for real-data = 1.96,  for generated SBTS =  2.34 }} 
    \label{fig:fat-tailed-returns}
\end{figure}

\begin{figure}[H]
    \centering
    \includegraphics[width=10cm,height=5cm]{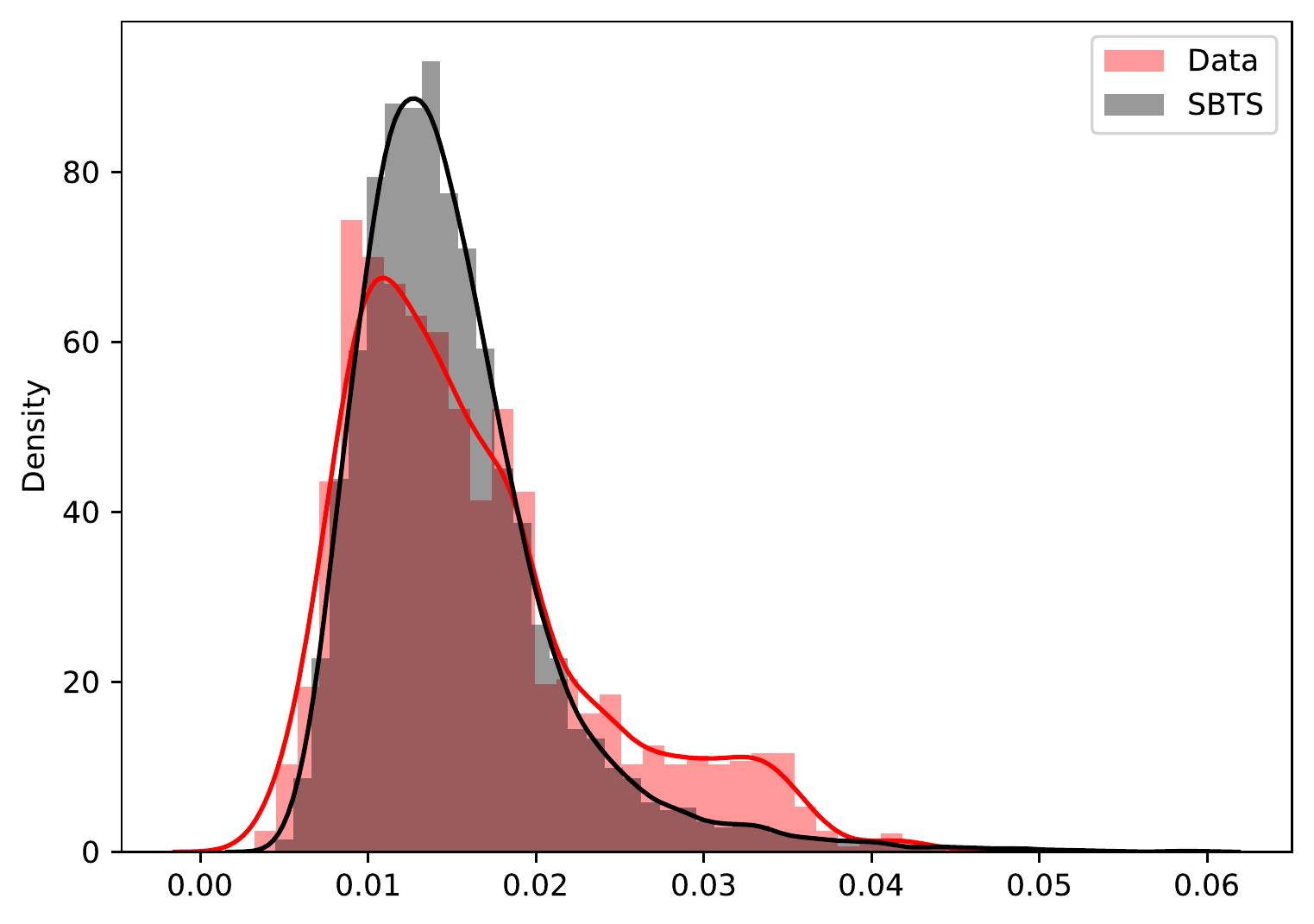}
    \caption{Comparison of quadratic variation distribution}
    \label{fig:AppleQV}
\end{figure}

\begin{figure}[H]
    \centering
    \includegraphics[width=13cm,height=5.5cm]{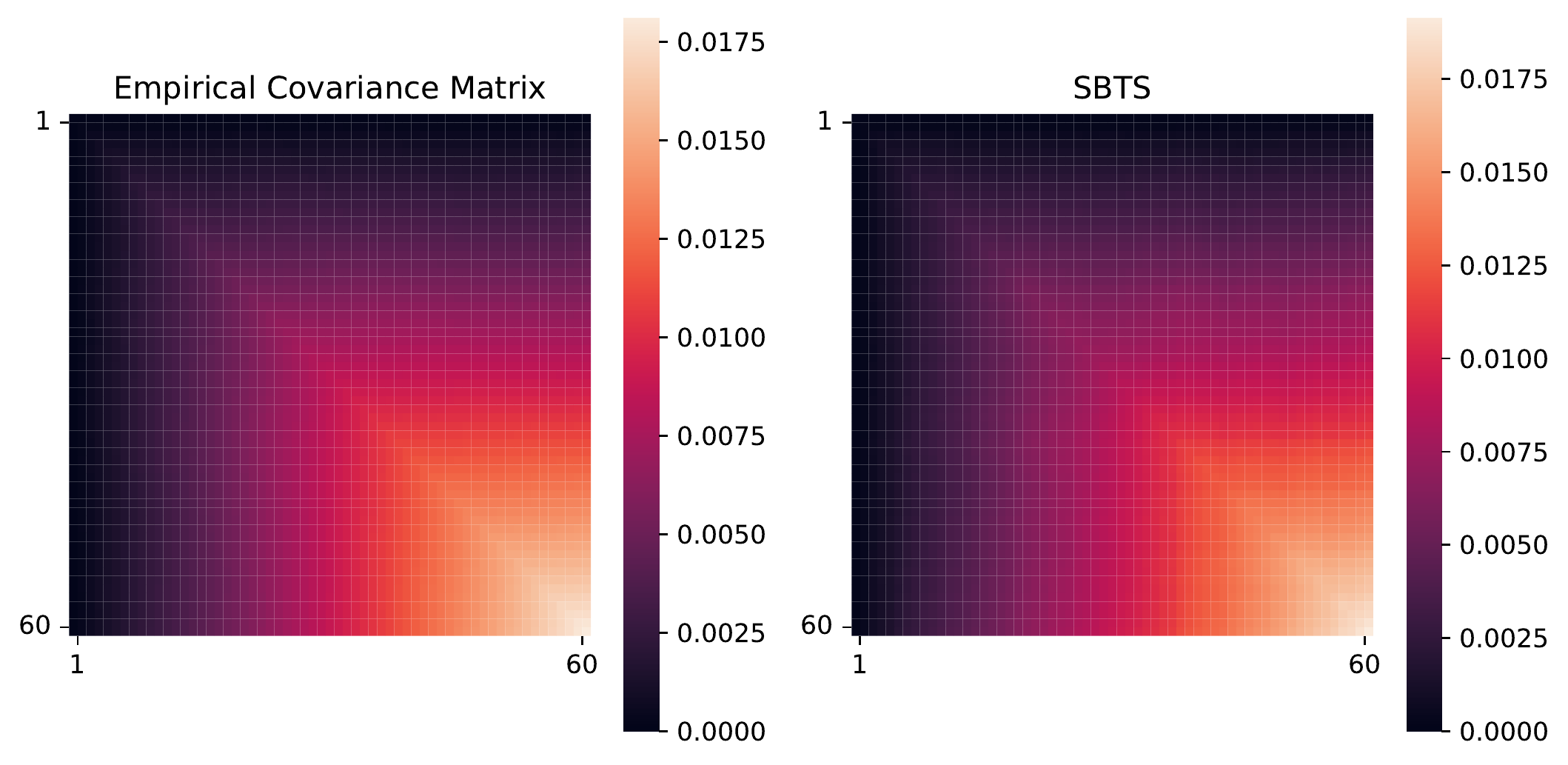}
    \caption{\footnotesize{Covariance matrix for real-data and generative SBTS}}
    \label{fig:Applecov}
\end{figure}

\vspace{2mm}

The synthetic time series generated by SBTS is now used for the deep hedging of ATM 
call option $g(S_T)$ $=$ $(S_T-S_0)_+$, i.e.,  by minimizing over the initial capital $p$ (premium)  and the parameters of the neural network $\Delta$ the (empirical)  loss function, called replication error: 
\begin{align}
 \E \big| {\rm PnL^{p,\Delta}} \big|^2, \quad \mbox{ with } \quad 
 {\rm PnL^{p,\Delta}} \; = \;  p + \sum_{i=0}^{N-1} \Delta(t_i,S_{t_i}) (S_{t_{i+1}} - S_{t_i}) \; - \; g(S_T). 
\end{align}
We then compare with the deep hedging on historical data by looking at the PnL and replication errors. The historical data set of Apple is split in  the chronological order, namely training data set from 01/01/2007 to 31/12/2017, validation data set from 01/01/2018 to 31/12/2028, and test set from 01/01/2019 to 30/01/2020. As pointed out in \cite{ruf}, it is important not to break the time structure as it may lead to an overestimation of the model performance.

In Figure \ref{fig:ApplePnL}, we plot the empirical distribution of the PnL with deep hedging obtained from real data vs SBTS, and backtested on the validation and test sets.  It appears that the PnL from SBTS has a smaller variance (hence smaller replication error), and yields less extreme values, i.e. outside the zero value, than the PnL from real data. This is also quantified in Table \ref{table:ApplePnL} where we note that the premium obtained from SBST is higher than the one from real data, which means that one is more conservative with SBTS by charging a higher premium.

\begin{figure}[H]
    \centering
    \includegraphics[width=6cm,height=5cm]{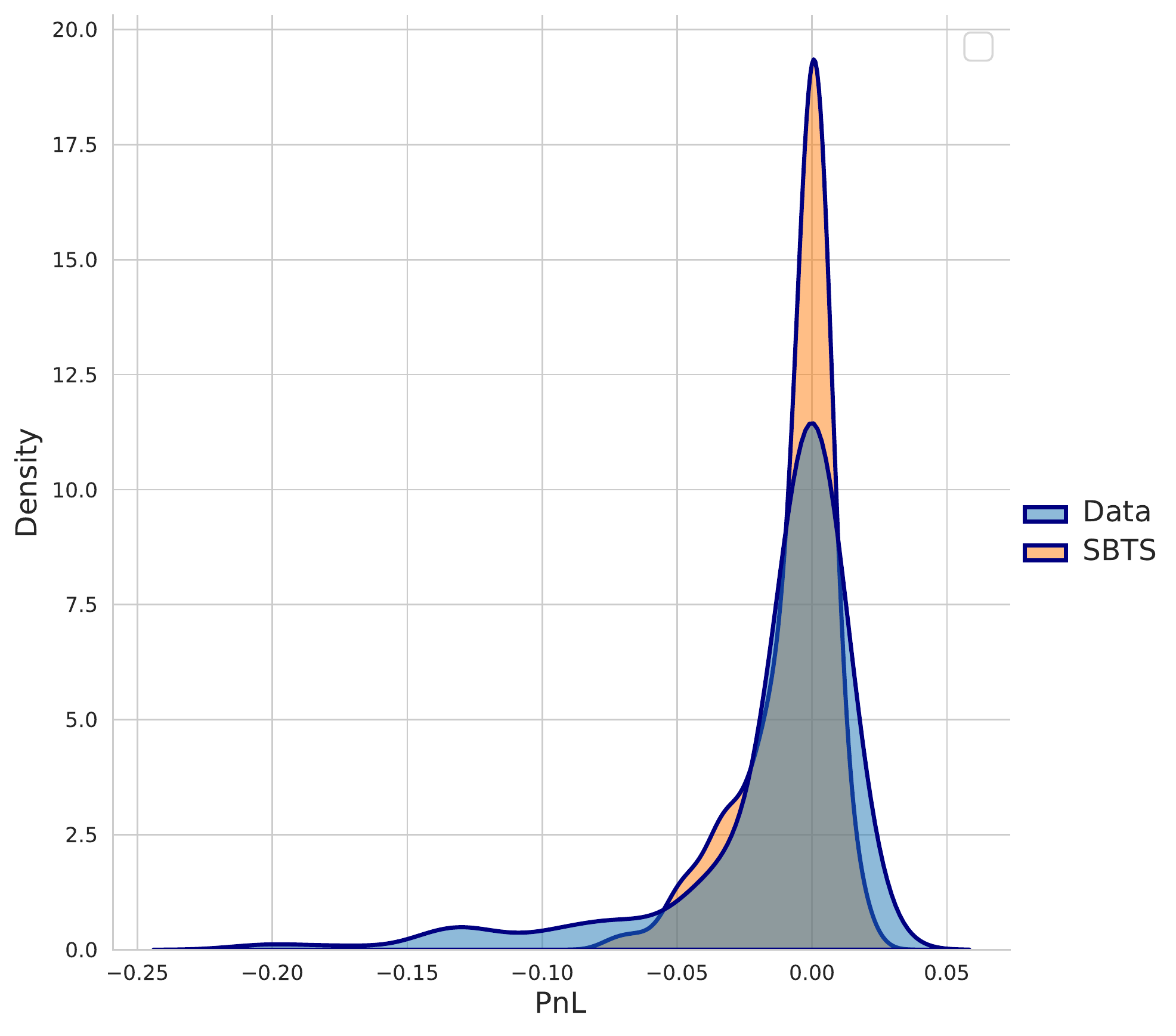}
    \includegraphics[width=6cm,height=5cm]{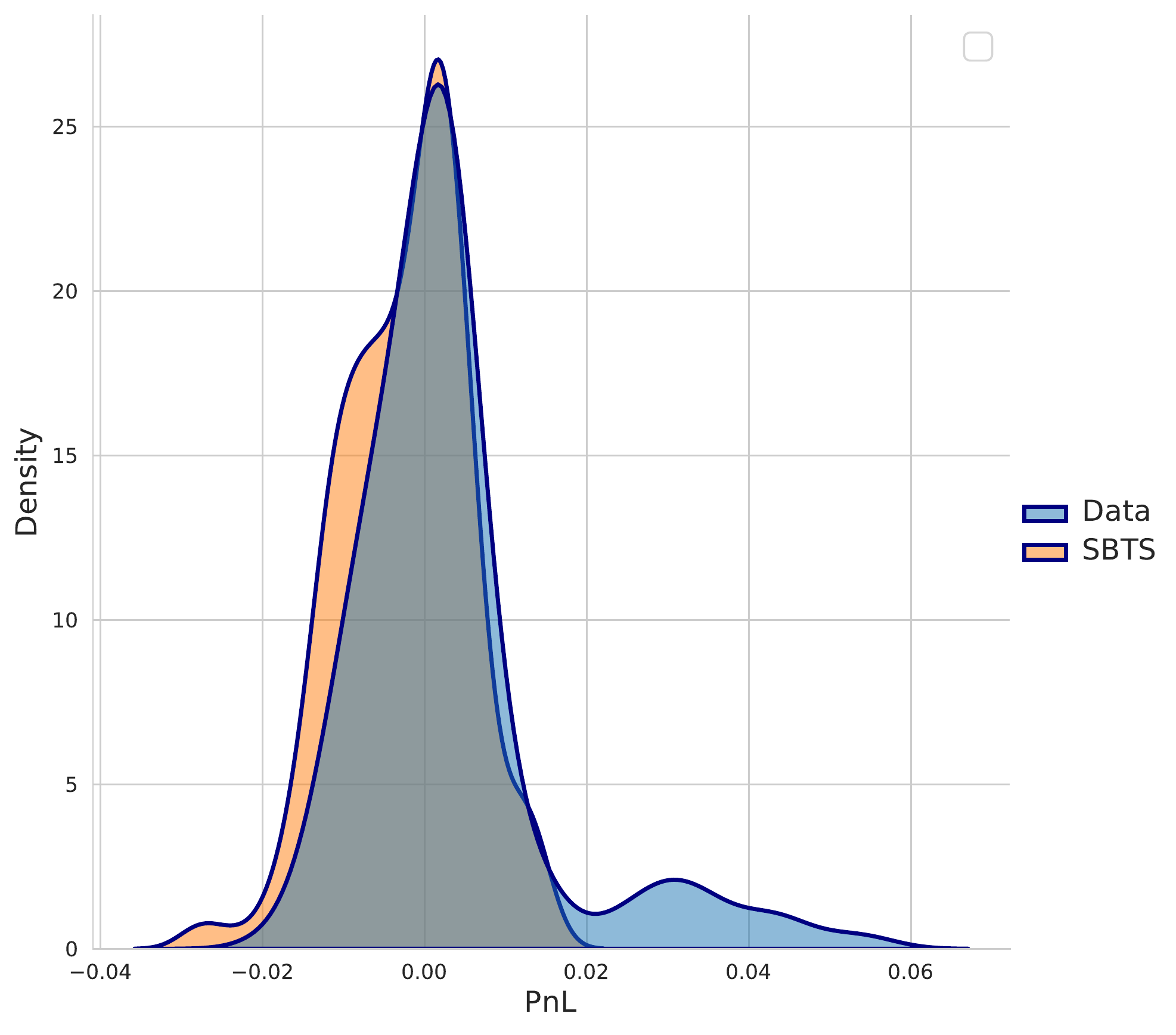}
    \caption{\footnotesize{Deep hedging PnL distribution with backtest from validation set ({\it left}) and test set ({\it right}).}}
    \label{fig:ApplePnL}
\end{figure}


\begin{table}[H]
\centering
\begin{tabular}{ll|ll|ll|ll|}
\cline{3-8}
                           &        & \multicolumn{2}{l|}{Training Set}    & \multicolumn{2}{l|}{Validation Set}   & \multicolumn{2}{l|}{Test Set}         \\ \cline{2-8} 
\multicolumn{1}{l|}{}      & Premium  & \multicolumn{1}{l|}{Mean}   & Std    & \multicolumn{1}{l|}{Mean}    & Std    & \multicolumn{1}{l|}{Mean}    & Std    \\ \hline
\multicolumn{1}{|l|}{Data} & 0.0415 & \multicolumn{1}{l|}{0.0008} & 0.0098 & \multicolumn{1}{l|}{-0.0154} & 0.0371 & \multicolumn{1}{l|}{0.003}   & 0.012  \\ \hline
\multicolumn{1}{|l|}{SBTS} & 0.0471 & \multicolumn{1}{l|}{0.0004} & 0.0109 & \multicolumn{1}{l|}{-0.0075} & 0.0164 & \multicolumn{1}{l|}{-0.0024} & 0.0076 \\ \hline
\end{tabular}
\caption{\footnotesize{Mean of PnL and its Std (replication error).}} \label{table:ApplePnL}
\end{table}

\section{Further tests in high dimension}

In this section, we illustrate how our SB approach can be used for generating samples in very high dimension. 


We use a data set of images from MNIST with training size $M$ $=$ $10000$. 
We first start with handwritten digital numbers, and plot in Figure \ref{fig:real_mnist_versus_SB} the static images from real MNIST data set and the ones generated by SBTS. The number of pixels is 
$28\times 28$, and the runtime for generating $16$ iamges is equal to $120$ seconds.

\begin{figure}[H]
    \centering
    \includegraphics[width=6cm,height=5.2cm]{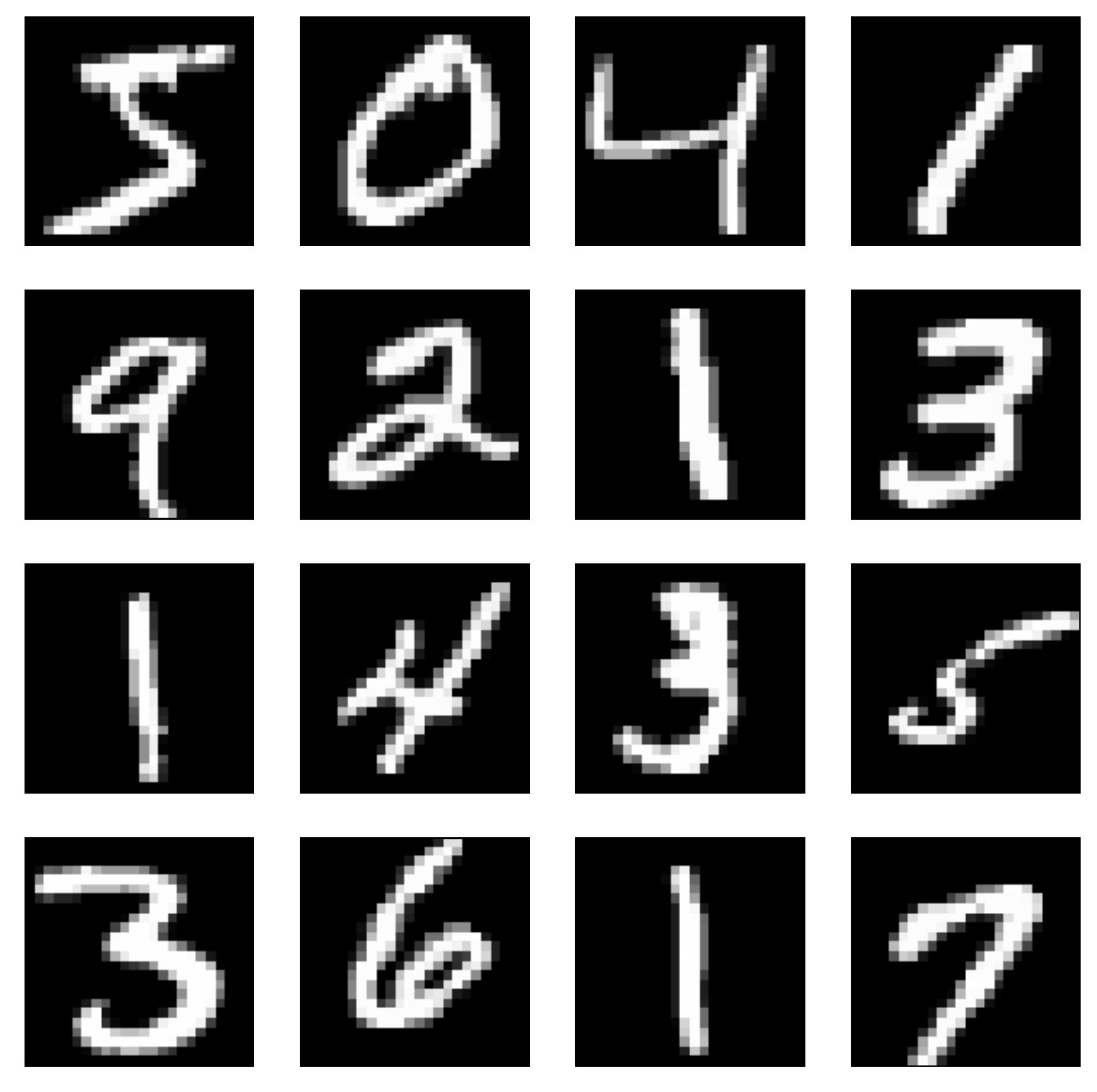}
    \quad
    \includegraphics[width=6cm,height=5.2cm]{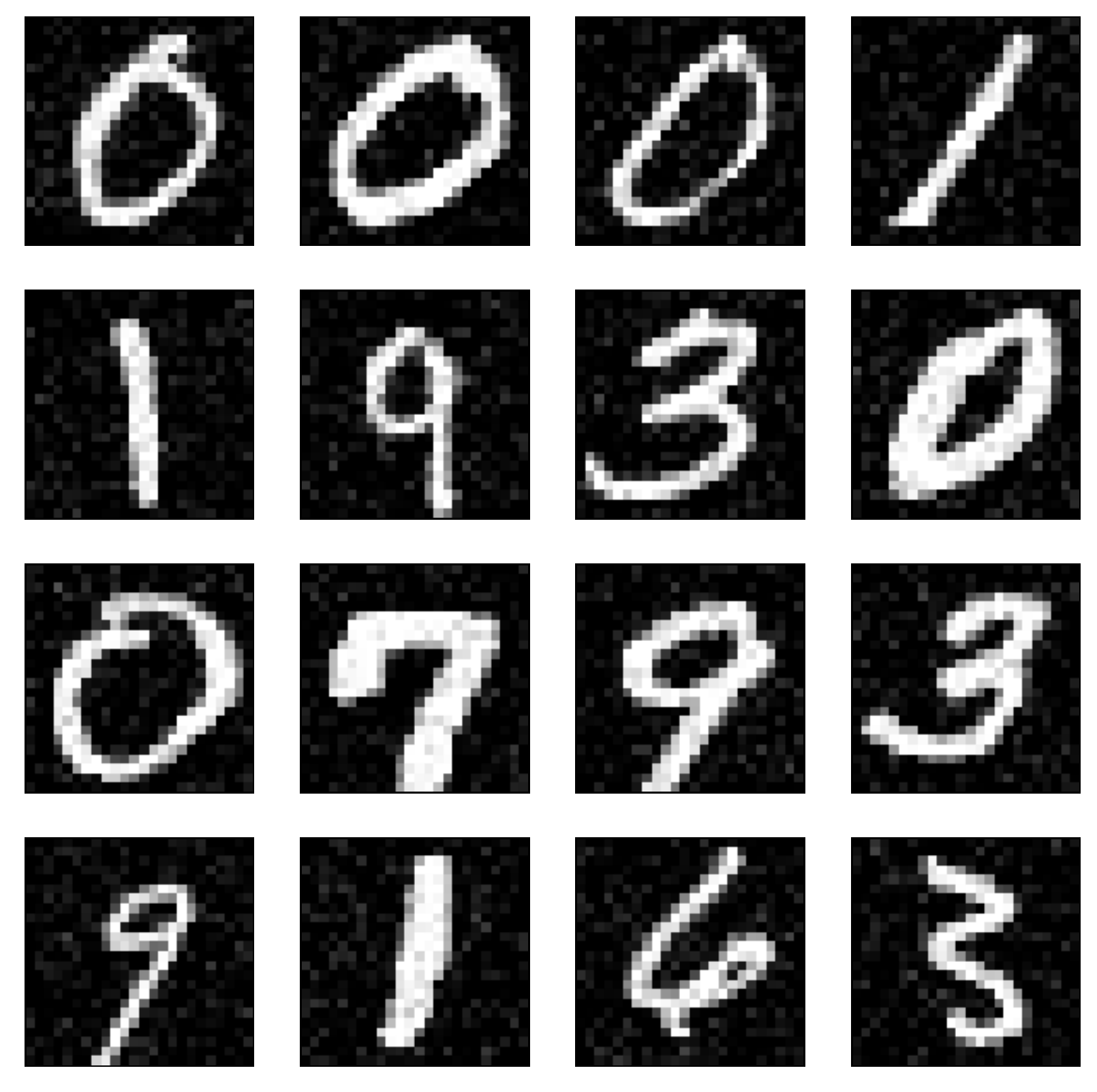}
    \caption{\footnotesize{{\it Left}: MNIST samples. {\it Right}: 
     Generated SBTS samples.}}
    \label{fig:real_mnist_versus_SB}
\end{figure}

Next, in Figures \ref{fig:real1_dist_mnist} and \ref{fig:real2_dist_mnist}, we plot sequential 
images sampled from real data set, and compare with the sequence generated by SBTS algorithm. 
The number of pixels is $14\times 14$, and the runtime for generating $100$ paths of sequential  
images is equal to $9$ minutes. 


\begin{figure}[H]
    \centering
    \includegraphics[width=12cm,height=1.9cm]{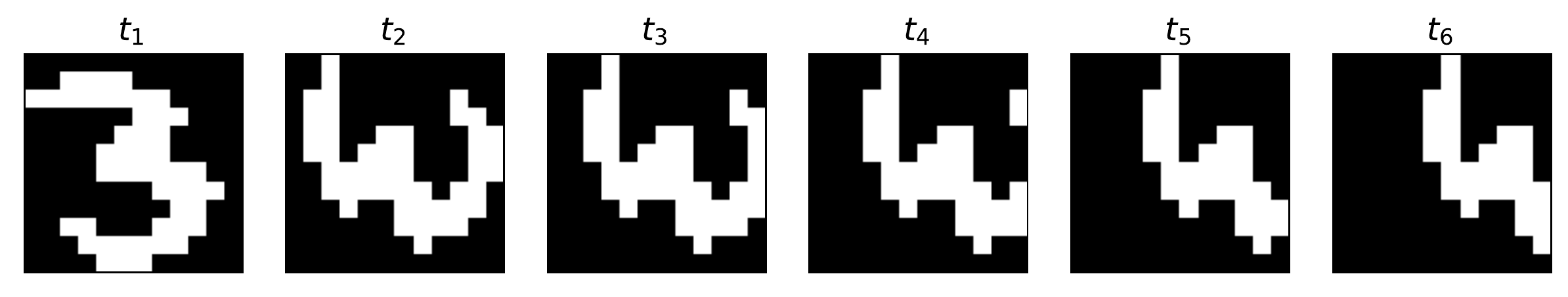}
    \\
    \includegraphics[width=12cm,height=1.9cm]{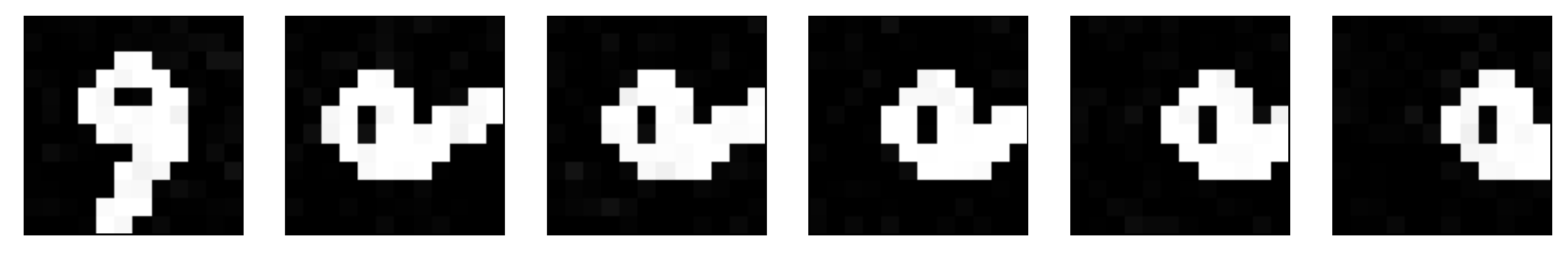}
    \caption{\footnotesize{{\it Top}: a time series sampled from real distribution. {\it Bottom}: generated time series via SBTS}}
    \label{fig:real1_dist_mnist}
\end{figure}

\begin{figure}[H]
    \centering
    \includegraphics[width=12cm,height=1.9cm]{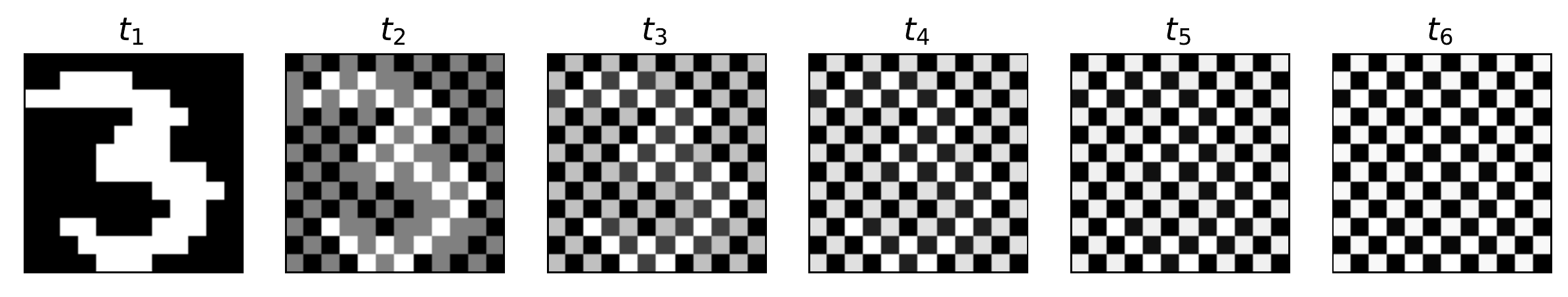}
    \\
    \includegraphics[width=12cm,height=1.9cm]{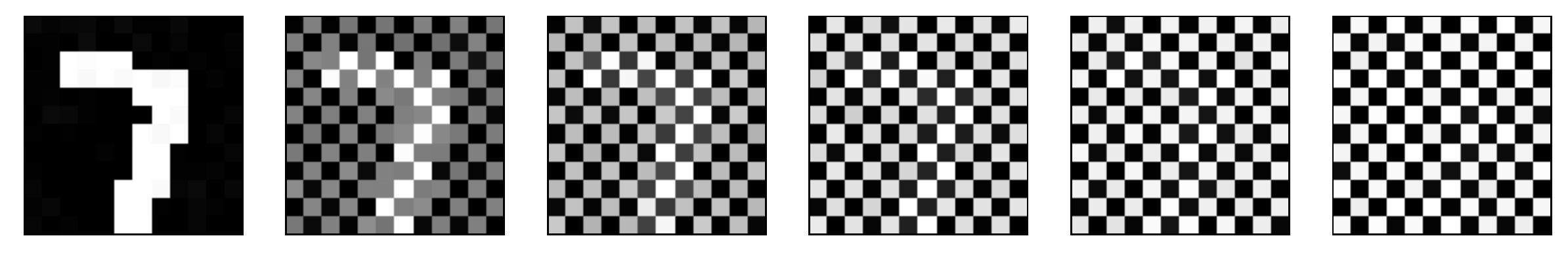}
    \caption{\footnotesize{{\it Top}: a time series sampled from real distribution. {\it Bottom}: generated time series via SBTS}}
    \label{fig:real2_dist_mnist}
\end{figure}

\bibliographystyle{plain}

\bibliography{biblioSBP}

\end{document}